\newcommand{\za}{\alpha}
\newcommand{\zb}{\beta}
\newcommand{\zg}{\gamma}
\newcommand{\zs}{\sigma}
\newcommand{\Tbar}{\overline{T}}
\newcommand{\Sbar}{\overline{S}}
\newcommand{\Mbar}{\overline{M}}
\newcommand{\pibar}{\overline{\pi}}
\newcommand{\zabar}{\overline{\alpha}}
\newcommand{\zbbar}{\overline{\beta}}
\newcommand{\zgbar}{\overline{\gamma}}
\newcommand{\taubar}{\overline{\tau}}
\newtheorem{thm}{Theorem}[section]
\newtheorem{prop}[thm]{Proposition}
\newtheorem{cor}[thm]{Corollary}
\newtheorem{lem}[thm]{Lemma}
\newtheorem{example}[thm]{Example}
\newtheorem{definition}{Definition} 
\newtheorem{rem}[thm]{Remark}
\newenvironment{pf}{{Proof}.}
\begin{document}
\title{On cluster algebras arising from unpunctured surfaces}
  \thanks{The first author is
  partly supported by the NSF grant DMS-0700358 and the University
  of Massachusetts at Amherst; the second author is partly supported
  by an NSERC  Discovery Grant} 
\author{Ralf Schiffler}
\author{Hugh Thomas}

\date{}


\begin{abstract} We study cluster algebras that are associated to unpunctured
surfaces, with coefficients arising from boundary arcs. We give a direct formula for the Laurent polynomial
expansion of cluster variables in these cluster algebras in terms of
certain paths on a triangulation of the surface.
 As an immediate consequence, we prove the positivity
conjecture of Fomin and Zelevinsky for these cluster algebras. In the
special case where the 
cluster algebra is acyclic, we also give a formula for the
expansion of cluster variables as a  polynomial whose indeterminates
are the cluster variables contained in the union of an arbitrary
acyclic cluster and all its neighbouring clusters in the mutation graph. 
 \end{abstract}
\maketitle



\begin{section}{Introduction}\label{section intro}
Cluster algebras, introduced in \cite{FZ1}, are commutative algebras
equipped with a distinguished set of generators, the \emph{cluster
  variables}. The cluster variables are grouped into sets of constant
cardinality $n$, the \emph{clusters}, and the integer $n$ is called
the \emph{rank} of the cluster algebra. Starting with an initial cluster
$\mathbf{x}$ (together with a skew symmetrizable integer $n\times n$ matrix $B=(b_{ij})$ and a coefficient vector
$\mathbf{p}=(p_i^\pm)$ whose entries are elements of a torsion-free
abelian group $\mathbb{P}$) the set of cluster variables is obtained
by repeated application of so called \emph{mutations}. 
To be more precise, let $\mathcal{F}$ be 
 the field of rational functions in the indeterminates $x_1,x_2,\ldots,x_n$
over the quotient field of the integer group ring
$\mathbb{ZP}$. Thus $\mathbf{x}=\{x_1,x_2,\ldots,x_n\}$ is a
transcendence basis for $\mathcal{F}$.
For every $k=1,2,\ldots,n$, the  mutation
$\mu_k(\mathbf{x})$ of the cluster
$\mathbf{x}=\{x_1,x_2,\ldots,x_n\}$ is a new cluster 
$\mu_k(\mathbf{x})=\mathbf{x}\setminus \{x_k\}\cup\{x_k'\}$ obtained
  from $\mathbf{x}$ by replacing the cluster variable $x_k$ by the new
  cluster variable 
\begin{equation}\label{intro 1}
x_k'= \frac{1}{x_k}\,\left(p_i^+\,\prod_{b_{ki}>0} x_i^{b_{ki}} +
p_i^-\,\prod_{b_{ki}<0} x_i^{-b_{ki}}\right)
\end{equation}  
in $\mathcal{F}$.
Mutations also change the attached matrix $B$ as well as the coefficient
vector $\mathbf{p}$, see \cite{FZ1}.

The set of all cluster variables is the union of all clusters obtained
from an initial cluster $\mathbf{x}$ by repeated mutations. Note that
this set may be infinite.

It is clear from the construction that every cluster variable is a
rational function in the initial cluster variables
$x_1,x_2,\ldots,x_n$. In \cite{FZ1} it is shown that every cluster
variable $u$ is actually a Laurent polynomial in the $x_i$, that is,
$u$ can be written as a reduced fraction 
\begin{equation}\label{intro 2}
u=\frac{f(x_1,x_2,\ldots,x_n)}{\prod_{i=1}^n x_i^{d_i}},
\end{equation}  
where $f\in\mathbb{ZP}[x_1,x_2,\ldots,x_n]$ and $d_i\ge 0$.
The right hand side of equation (\ref{intro 2}) is called the
\emph{cluster expansion} of $u$ in $\mathbf{x}$.

Inspired by the work of Fock and Goncharov \cite{FG1,FG2,FG3} and
Gekhtman, Shapiro and Vainshtein \cite{GSV1,GSV2} which discovered
cluster structures in the context of
Teichm\"uller theory, Fomin, Shapiro and Thurston  \cite{FST} initiated a
systematic study of the cluster algebras arising from 
triangulations of a surface with boundary and interior marked points.
 In this approach, clusters in the cluster algebra
correspond to triangulations of the surface. Our first main result is
a direct expansion formula  for cluster variables in cluster
 algebras associated to unpunctured
surfaces, with coefficients arising from boundary arcs, in terms of
certain paths on the triangulation, see Theorem \ref{thm 1}. 

As an immediate consequence, we
prove the positivity conjecture of Fomin and Zelevinsky \cite{FZ1} for
these cluster algebras, Corollary \ref{cor 1}.

For \emph{acyclic} cluster algebras, it has been shown in \cite{CA3} that if
the cluster $\mathbf{x}=\{x_1,x_2,\ldots,x_n\}$ occurs in an acyclic
seed and if $x_1',x_2',\ldots,x_n'$ denote the $n$ cluster variables
obtained by mutating $\mathbf{x}$ in each direction,
 then any cluster variable $u$ can be written as a
polynomial in $x_1,x_2,\ldots,x_n,x'_1,x'_2,\ldots,x'_n$, that is,
\begin{equation}\label{intro 3}
u=f(x_1,\ldots,x_n,x'_1,\ldots,x'_n) \qquad \textup{with }
f\in\mathbb{ZP}[x_1,\ldots,x_n,x'_1,\ldots,x'_n]. 
\end{equation}   
Our second main result, Theorem \ref{thm 2}, is an explicit formula
for this polynomial in the case of acyclic cluster algebras associated
to unpunctured surfaces, with coefficients arising from boundary arcs.

Theorem \ref{thm 1} has interesting intersections with work of other
people. In \cite{CCS2}, the authors obtained a formula for the
denominators of the cluster expansion in types $A,D$ and $E$, see also
\cite{BMR}. In \cite{CC,CK,CK2} an expansion formula was given in the
case where the cluster algebra is acyclic and the cluster lies in an
acyclic seed. Palu recently generalized this formula to arbitrary
clusters in an acyclic cluster algebra \cite{Palu}. All these formulas
use the cluster category introduced in \cite{BMRRT}, and in \cite{CCS1} for
type $A$. The formula that we give in this paper  not only uses a
very different approach, it also covers a large variety of cluster
algebras (parametrized by the genus of the surface, the number of
boundary components and the number of marked points on the boundary) for which no
formula has been known so far.  The only surfaces that give rise to acyclic
cluster algebras are the polygon and the annulus  corresponding to the
types $A$ and $\tilde A$ respectively. The proof of the positivity
conjecture for arbitrary clusters is new even in the acyclic types; in
\cite{CK} and \cite{CR} the conjecture is shown only in the case where
the initial seed is acyclic. 

In \cite{SZ,CZ,Z,MP} cluster expansions for cluster algebras of
rank 2 are given, in \cite{Propp,CP,FZ3} the case $A$ is
considered and in \cite{M} a cluster expansion for cluster algebras of
finite type is given for clusters that lie in a bipartite seed.

The paper is organized as follows. In section \ref{sect FST}, we
recall the construction of \cite{FST}. We state our expansion formula and
give two examples in section \ref{sect 1}. The proof of the formula
given in section \ref{sect 2}. Section \ref{sect 3} is devoted to our
polynomial formula for acyclic clusters.

The work in this paper extends
 the work of the first author in \cite{Sch}, in which the expansion formula in
 the type A case was proved.

The authors would like to acknowledge helpful conversations with Gregg
Musiker, Robert Marsh, and Sergey Fomin, and useful comments from a
referee.
They would also like to acknowledge the hospitality of the Centre de
Recherches Math\'ematiques.

\end{section} 



\begin{section}{Cluster algebras from surfaces}\label{sect FST}
In this subsection, we recall the construction of \cite{FST} in the
case of surfaces without punctures.

Let $S$ be a connected oriented 2-dimensional Riemann surface with
boundary and $M$ a non-empty set of marked points in the closure of
$S$ with at least one marked point on each boundary component. The
pair $(S,M)$ is called \emph{bordered surface with marked points}. Marked
points in the interior of $S$ are called \emph{punctures}.  

In this paper we will only consider surfaces $(S,M)$ such that all
marked points lie on the boundary of $S$, and we will refer to $(S,M)$
simply by \emph{unpunctured surface}. 

We say that two curves in $S$ \emph{do not cross} if they do not intersect
each other except that endpoints may coincide.

\begin{definition}
An \emph{arc} $\zg$ in $(S,M)$ is a curve in $S$ such that 
\begin{itemize}
\item[(a)] the endpoints are in $M$,
\item[(b)] $\zg$ does not cross itself,
\item[(c)] the relative interior of $\zg$ is disjoint from $M$ and
  from the boundary of $S$,
\item[(d)] $\zg$ does not cut out a monogon or a digon. 
\end{itemize}   
\end{definition}     
 Curves that connect two
marked points and lie entirely on the boundary of $S$ without passing
through a third marked point are called \emph{boundary arcs}.
Hence an arc is a curve between two marked points, which does not
intersect itself nor the boundary except possibly at its endpoints and
which is not homotopic to a point or a boundary arc.

Each arc is considered up to isotopy inside the class of such curves.

For any two arcs $\zg,\zg'$ in $S$, let $e(\zg,\zg')$ be the minimal
number of crossings of $\zg$ and $\zg'$, that is, $e(\zg,\zg')$ is the
minimum of
the numbers of crossings of  arcs $\za$ and $\za'$, where $\za$ is
isotopic to $\zg$ and $\za'$ is isotopic to $\zg'$.
Two arcs $\zg,\zg'$ are called \emph{compatible} if $e(\zg,\zg')=0$. 
A \emph{triangulation} is a maximal collection of
compatible arcs together with all boundary arcs. 
The arcs of a 
triangulation cut the surface into \emph{triangles}.
Since $(S,M)$ is an unpunctured surface, the three sides of each
triangle are distinct (in contrast to the case of surfaces with
punctures).  Any triangulation  has
$n+m$ elements, $n$ of which  are arcs in $S$, and the remaining $m$
elements are boundary arcs. Note that the number of boundary arcs
is equal to the number of marked points.

\begin{prop}\label{prop rank}
The number $n$ of arcs in any triangulation is  given by the formula 
$n=6g+3b+m-6$,  where $g$ is the
genus of $S$, $b$ is the number of boundary components and $m=|M|$ is the
number of marked points. The number $n$ is called the \emph{rank} of $(S,M)$.
\end{prop}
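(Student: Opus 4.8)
The plan is to compute the Euler characteristic of the surface $S$ in two ways. On the one hand, $S$ is a compact oriented surface of genus $g$ with $b$ boundary components, so its Euler characteristic is $\chi(S) = 2 - 2g - b$. On the other hand, a triangulation of $(S,M)$ together with the marked points $M$ gives $S$ the structure of a CW-complex, and we can count cells directly: the $0$-cells are the marked points, of which there are $m$; the $1$-cells are the arcs together with the boundary arcs, of which there are $n + m$ (using that the number of boundary arcs equals $m$, as noted just before the statement); and the $2$-cells are the triangles, say $t$ of them. First I would establish a relation between $t$ and $n$ by the standard edge-counting argument: each triangle has three sides, each interior arc borders exactly two triangles, and each of the $m$ boundary arcs borders exactly one triangle, so $3t = 2n + m$, whence $t = (2n+m)/3$.

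Next I would combine these. Euler's formula gives
\[
2 - 2g - b \;=\; m \;-\; (n+m) \;+\; t \;=\; -\,n \;+\; \frac{2n+m}{3}.
\]
Solving for $n$: multiply through by $3$ to get $6 - 6g - 3b = -3n + 2n + m = -n + m$, hence $n = 6g + 3b + m - 6$, which is the claimed formula. That the answer is independent of the chosen triangulation is automatic, since $g$, $b$, and $m$ are invariants of $(S,M)$; in particular any two triangulations have the same number of arcs.

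The main point requiring care is the justification that a triangulation really does yield a CW-structure on $S$ in which the $2$-cells are precisely the triangles cut out by the arcs, so that the alternating-sum formula $\chi = \#\{0\text{-cells}\} - \#\{1\text{-cells}\} + \#\{2\text{-cells}\}$ applies. This is where the hypothesis that $(S,M)$ is \emph{unpunctured} is used: with no punctures, the three sides of every triangle are distinct arcs and the complementary regions are genuine open discs, so there are no degenerate faces (self-folded triangles or the like) to complicate the cell count. Once one grants that the arcs of a triangulation decompose $S$ into triangular discs — which is essentially the definition of a triangulation recalled above — the edge-counting identity $3t = 2n+m$ and the Euler-characteristic computation are routine, and the formula follows immediately.
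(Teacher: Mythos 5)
Your argument is correct: the Euler-characteristic computation $\chi(S)=2-2g-b$ combined with the face--edge count $3t=2n+m$ yields the formula, and the edge count is justified because in the unpunctured case every interior arc lies on exactly two triangles and every boundary arc on exactly one. The paper itself gives no proof, deferring to Proposition 2.10 of [FST]; your proof is essentially the standard argument used there, specialized to the case without punctures, so you have simply made explicit what the paper outsources to the citation.
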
  
\begin{pf} \cite[2.10]{FST}
\qed
\end{pf}  

Note that $b> 0$ since the set $M$ is not empty.
 Table \ref{table 1} gives some examples of unpunctured surfaces.

\begin{table}
\begin{center}
  \begin{tabular}{ c | c | c || l  }
  \  b\ \  &\ \  g \ \   & \ \  m \ \  &\  surface \\ \hline
    1 & 0 & n+3 & \ polygon \\ 
    1 & 1 & n-3 & \ torus with disk removed \\
    1 & 2 & n-9 & \ genus 2 surface with disk removed \\\hline 
    2 & 0 & n & \ annulus\\
    2 & 1 & n-6 & \ torus with 2 disks removed \\ 
    2 & 2 & n-12 & \ genus 2 surface with 2 disks removed \\ \hline
    3 & 0 & n-3 & \ pair of pants \\ \\
  \end{tabular}
\end{center}
\caption{Examples of unpunctured surfaces}\label{table 1}
\end{table}

Following  \cite{FST}, we associate a cluster algebra
$\mathcal{A}(S,M)$ to the unpunctured surface $(S,M)$ as follows.
The coefficient semifield is taken to be the tropical semifield
$\textup{Trop}(x_{n+1},x_{n+2},\ldots,x_{n+m})$, which is a free
abelian group, written multiplicatively, with $m$ generators
$x_{n+1},x_{n+2},\ldots,x_{n+m}$, and with an auxiliary addition
which we do not need to refer to here.
 
 Choose any triangulation
$T$, let $\tau_1,\tau_2,\ldots,\tau_n$ be the $n$ arcs of
$T$ and  denote the  $m$ boundary
arcs of the surface by $\tau_{n+1},\tau_{n+2},\ldots,\tau_{n+m}$. 
Each of the boundary arcs is a side in
precisely one triangle of the triangulation $T$. 
For any triangle $\Delta$ in $T$ define a matrix 
$B^\Delta=(b^\Delta_{ij})_{1\le i\le m+n, 1\le j\le n}$  by
\[ b_{ij}^\Delta=\left\{
\begin{array}{ll}
1 & \textup{if $\tau_i$ and $\tau_j$ are sides of 
  $\Delta$  }\\ & \textup{ with  $\tau_j$ following $\tau_i$  in the clockwise
  order;}\\
-1 &  \textup{if $\tau_i$ and $\tau_j$ are sides of
  $\Delta$}, \\& \textup{ with  $\tau_j$ following $\tau_i$  in the
  counter-clockwise 
  order;}\\
0& \textup{otherwise.}
\end{array} \right. \]
Then define the matrix 
$\tilde B(T)=(b_{ij})_{1\le i\le n+m, 1\le j\le n}$  by
$b_{ij}=\sum_\Delta b_{ij}^\Delta$, where the sum is taken over all
triangles in $T$, and let $B(T)=(b_{ij})_{1\le i,j\le n}$ be the
principal part of $\tilde B(T)$.
The matrix $B(T)$ is skew-symmetric and each of its entries  $b_{ij}$ is either
$0,1,-1,2$, or $-2$. An example where $b_{ij}=2 $ is given in Figure
\ref{fig bij=2}.

\begin{figure}
\input{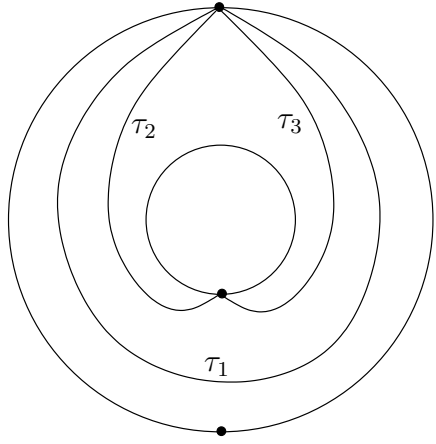}
\caption{A triangulation with $b_{23}=2$ \label{fig bij=2}}
\end{figure}   

Note that every arc $\tau$ can be in at most two triangles, since the
surface $S$ has no punctures. 
 
Let $\mathcal{A}(S,M)$ be the cluster algebra given by the seed
$(\mathbf{x},\mathbf{p},B(T))$ where
$\mathbf{x}=\{x_{\tau_1},x_{\tau_2},\ldots,x_{\tau_n}\}$ is the
cluster associated to the triangulation $T$, and the initial coefficient
vector $\mathbf{p}=(p_1^\pm,p_2^\pm,\ldots,p_n^\pm)\in (\textup{Trop}(x_{n+1},x_{n+2},\ldots,x_{n+m}))^{2n}$ is given by 
$$p_j^+=\prod_{i>n : b_{ij}=1} x_i \quad \textup{and}\quad
p_j^-=\prod_{i>n : b_{ij}=-1} x_i.$$ 

\begin{rem}
If one considers the cluster algebra $\mathcal{A}(S,M)$ with
 trivial coefficients then set $x_\tau=1$ for each boundary arc $\tau$.
\end{rem}


\end{section}

\begin{section}{Cluster expansions}\label{sect 1}
\begin{subsection}{$(T,\zg)$-paths}\label{sect 1.2}

A \emph{path} $\za$ in $S$ is a continuous function $\za:[0,1]\to S$.
Let $\za$ and $\zb$ be two paths in $S$, and let $x,y\in S$ be two points. Then we say that 
$\za$ and $\zb$ are \emph{homotopic between $x$ and $y$}, if 
there exist $s_1,s_2,t_1,t_2\in [0,1]$ such that $s_1<s_2,$ $t_1<t_2$,
$\za(s_1)=\zb(t_1)=x$, $\za(s_2)=\zb(t_2)=y$ and the restrictions
$\za\vert_{[s_1,s_2]}:[s_1,s_2]\to S$ and
$\zb\vert_{[t_1,t_2]}:[t_1,t_2]\to S$ are homotopic as paths from $x$
to $y$.

Let $T=\{\tau_1,\tau_2,\ldots,\tau_n,\tau_{n+1},\ldots,\tau_{N}\}$,
 with  $N=n+m$, be a
 triangulation of the unpunctured surface $(S,M)$,  where $\tau_1,\ldots,\tau_n$ are
arcs and $\tau_{n+1},\ldots,\tau_{N}$ are boundary arcs. 
Choose an orientation for each arc $\tau\in T$ and let $s(\tau)$ be
 the starting point of $\tau$ and $t(\tau)$ be its endpoint. Let
 $\tau^-$ be the arc $\tau$ with the opposite orientation. We will
 write $\tau^\pm$ if we want to consider both orientations at the same
 time. Let $\zg$ be an arc in $(S,M)$. Choose an orientation of $\zg$ and
 denote by $a$ its starting point and by $b$ its endpoint, thus $a,b\in M$.
Let $k=\sum_{\tau\in T}\,e(\zg,\tau)$ be the number of crossings
 between $\zg$ and $T$, and label the $k$ crossing points of $\zg$ and
 $T$ by $1,2,\ldots,k$ according to their order on $\zg$ such that $1$
 is the closest to $a$.

We will consider paths $\za$ in $S$ that are concatenations of arcs in the
triangulation $T$, more precisely, $\za=
(\za_1,\za_2,\ldots,\za_{\ell(\za)})$ with 
 $\za_i$ or $\za_i^- \in T$, for $i=1,2,\ldots, \ell(\za)$ and
$s(\za_1)=s(\zg)$, $t(\za_{\ell(\za)})=t(\zg)$, and  
  $s(\za_i)=t(\za_{i-1})$,   for  $i=2,\ldots,\ell(\za)$. We call such
a path a $T$-path. A $T$-path $\za$ is called \emph{reduced} if
$\za_i\ne \za_{i-1}^-$,   for  $i=2,\ldots,\ell(\za)$. 

\begin{definition}\label{Tpath}
A \emph{$(T,\zg)$-path} $\za$  is a reduced $T$-path
\[ \za = (\za_1,\za_2,\ldots,\za_{\ell(\za)})\]
such that 
\begin{itemize}
\item[\textup{(T1)}] $\ell(\za)$ is odd,
\item[\textup{(T2)}] if  $i$ is even, then $\za_i$ crosses $\zg$,
\item[\textup{(T3)}] for any $\tau\in T$, the number of even integers $i$ such that
  $\za_i=\tau^\pm$ is at most $e(\tau,\zg)$,
\item[\textup{(T4)}]  there exists a sequence
${\bf i}_\za=(i_0,i_2,i_4,\ldots,i_{\ell(\za)-1},i_{\ell(\za)+1})$,
  $i_j \in \{1,2,\ldots,k\}$, 
  such that $i_0=s(\zg), i_{\ell(\za)+1}=t(\zg)$ and
  $(i_2,i_4,\ldots,i_{\ell(\za)-1})$ is a sequence  of  labeled crossing points
   such that $i_j<i_\ell$ if $j<\ell$ and the 
  crossing point $i_j$ lies on $\za_j$, for
  $j=2,4,6,\ldots,\ell(\za)-1$, 
\item[\textup{(T5)}] for any two points $i_j,i_\ell$ in the
  sequence ${\bf i}_\za$ in \textup{(T4)}, with $j<\ell$, the paths $\za $
  and $\zg$ are homotopic between the points $i_j$ and $i_\ell$.
\end{itemize}    

\end{definition}

To any $(T,\zg)$-path $\za = (\za_1,\za_2,\ldots,\za_{\ell(\za)})$, 
we associate an element $x(\za)$ in the cluster algebra $\mathcal{A}(S,M)$ by

\begin{equation}\label{**}
x(\za)=
\prod_{i \textup{ odd}} x_{\za_i}
\ \prod_{i \textup{ even}}x_{\za_i}^{-1} . 
\end{equation}
Note that $x_{\tau}=x_{\tau^-}$.

\begin{definition}
Let $\mathcal{P}_T(\zg)$ denote the
set of $(T,\zg)$-paths.
\end{definition}     

\begin{prop}\label{prop 1} If $S $ is simply connected, then 
\begin{itemize}
\item[(a)] for any two arcs $\za$ and $\zb$ in $S$, we have
  $e(\za,\zb)\le 1$,
\item[(b)] any arc in $S$ can occur at most
once in any $(T,\zg)$-path, 
\item[(c)]  the map $\mathcal{P}_T(\zg)\to \mathcal{A}(S,M),\ \za\mapsto x(\za)$
is  injective.
\end{itemize}   
\end{prop}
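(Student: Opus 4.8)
Since $S$ is simply connected, it is a disc (as an unpunctured surface with boundary), so the triangulation $T$ is a triangulation of a polygon. Part (a) is essentially standard: in a disc, two arcs are each determined up to isotopy by their pair of endpoints, and two arcs with endpoints on the boundary of a disc either are disjoint or cross exactly once, according to whether their endpoint pairs ``link'' on the boundary circle. So I would first establish (a) by this linking argument — lifting to the universal cover is unnecessary since $S$ is already simply connected; one just observes that any two arcs are homotopic (rel endpoints) to straight chords of a convex polygon, and two chords meet in at most one point. The only subtlety to check is that homotoping both arcs simultaneously to minimal position does not create extra crossings, which follows from the bigon criterion (an innermost bigon between two arcs in a disc can always be removed).

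For part (b), suppose an arc $\tau$ of the triangulation occurs more than once in a $(T,\gamma)$-path $\alpha$. If $\tau$ occurs twice in even position, then by (T3) we need $e(\tau,\gamma)\ge 2$, contradicting (a). So at least one occurrence of $\tau$ (or $\tau^-$) is in odd position. I would argue that an arc $\tau$ of the triangulation never crosses $\gamma$ unless $\gamma$ crosses it, and more to the point: the condition that $\alpha$ is reduced, combined with (T4) and the homotopy condition (T5), forces the subpath of $\alpha$ between two occurrences of $\tau^{\pm}$ to be a nontrivial loop based at an endpoint of $\tau$; but in a simply connected surface every loop is null-homotopic, and (T5) together with the strictly increasing order of crossing labels in (T4) then forces the intermediate segment of $\gamma$ to be homotopically trivial between the relevant marked points, which is impossible for distinct crossing points on an arc. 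I expect this to be the main obstacle: making the ``forces a loop, hence contradiction'' step precise requires carefully tracking how the $i_j$ in (T4) interleave with the repeated occurrences of $\tau$ and invoking (T5) at exactly the right pair of indices. The cleanest route is probably: among all repetitions, pick $\tau=\alpha_j=\alpha_\ell^{\pm}$ with $j<\ell$ minimal; the closed subpath $(\alpha_j,\dots,\alpha_\ell)$ together with a segment of $\gamma$ bounds a disc (simple connectivity), and then minimality plus the reducedness condition contradicts that this disc is innermost.

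For part (c), I would show that the map $\alpha\mapsto x(\alpha)$ is injective by showing that $\alpha$ can be recovered from the Laurent monomial $x(\alpha)$. By (b), no arc of $T$ repeats in $\alpha$, so each $x_{\tau_i}$ occurs in $x(\alpha)$ with exponent $+1$, $-1$, or $0$; thus $x(\alpha)$ determines the \emph{set} of arcs appearing in $\alpha$ together with the parity (odd/even position) of each. It then suffices to check that this data determines the ordered sequence $\alpha=(\alpha_1,\dots,\alpha_{\ell(\alpha)})$: starting from $s(\gamma)$, the next arc must be the unique arc in the set incident to $s(\gamma)$ that can legally start a $(T,\gamma)$-path, and one continues inductively, using that at each vertex the $(T,\gamma)$-path conditions (T2)--(T5) pin down the continuation uniquely. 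Concretely, the crossing points $i_2<i_4<\cdots$ of (T4) are forced to be the crossing points of $\gamma$ with the even-position arcs listed in increasing order along $\gamma$, and the homotopy condition (T5) then determines each odd-position arc uniquely as the arc along which $\gamma$ passes from one such crossing region to the next. I would phrase the induction on $\ell(\alpha)$, peeling off $(\alpha_1,\alpha_2)$ and passing to the $(T,\gamma')$-path on the subsurface cut off, where $\gamma'$ is $\gamma$ with its initial segment up to crossing $i_2$ removed.
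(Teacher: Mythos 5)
Your proposal follows essentially the same route as the paper for all three parts: (a) directly from simple connectivity, (b) by showing that a repeated arc forces a null-homotopic loop through a crossing point and deriving a contradiction from (T5) and the minimality of crossing numbers, and (c) by recovering from the monomial the set of arcs together with their parities (using (b)), ordering the even arcs by their crossings along $\gamma$ via (T4), and noting that the odd arcs between successive even arcs are then forced. The only real difference is in how (b) is closed: where you invoke an ``innermost disc'' argument, the paper more cleanly observes that contracting the trivial loop exhibits a representative of the isotopy class of $\gamma$ that avoids the arc $\tau$ carrying the crossing point $i_t$, contradicting $e(\gamma,\tau)\ge 1$.
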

\begin{pf}
Let $T$ be a triangulation, $\zg$ an arc and $\za \in
\mathcal{P}_T(\zg)$. 

(a) This follows directly from the definition of $e(\za,\zb)$ and the fact
that $S$ is simply connected.

(b) Suppose that $\za_j=\za_\ell^\pm$ with $j\ne \ell$.
Then $\za$ contains a loop $\za^\circ$ which runs through at least one crossing
point $i_t$ of $\zg$ and $T$, and $i_t$ lies  on an arc $\tau\in
T$. Statement (a) implies that $i_t$
is the only crossing point on $\tau$.
 The loop $\za^\circ$ is homotopically trivial, since $S$ is simply
connected. By condition (T5),  $\za$ and $\zg$ are homotopic between
$a=s(\zg)$ and $b=t(\zg)$, and then the isotopy class of  $\zg$ contains an arc which
does not cross $\tau$, hence $i_t$ is not a crossing point, a contradiction.
This shows (b).

(c) Suppose that $S$ is simply connected and $x(\alpha)= x(\alpha')$. Then
(b) implies that the set of even arcs and the set of odd arcs are
the same up to orientation in $\za$ and $\za'$.  
From condition (T4) it follows that the order of the even
arcs is the same.
  The even arcs divide $S$ into regions.  There is a unique odd arc
in the region between successive even arcs.  Therefore the order of
the odd arcs must be the same, and thus the order of the marked points
along the paths are the same.  By simply-connectedness, knowing the
sequence of
vertices determines the paths, and thus the two paths are the same.
\qed
\end{pf}

\end{subsection}

\begin{subsection}{Expansion formula}\label{sect main}
  The following theorem is the main result of this section. 

\begin{thm}\label{thm 1} Let $T$ be any
  triangulation of  an unpunctured surface $(S,M)$. Let $\zg$ be any arc
  in $(S,M)$ and let $x_\zg$ denote the corresponding cluster
  variable in $\mathcal{A}(S,M)$. Then

\begin{equation}\label{eqthm1}
  x_{\zg}=\sum_{\za\in\mathcal{P}_T(\zg)} x(\za).
\end{equation}
\end{thm}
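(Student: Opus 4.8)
The plan is to argue by induction on $k=e(\gamma,T)=\sum_{\tau\in T}e(\gamma,\tau)$, the total number of crossings of $\gamma$ with $T$, where $\gamma$ is taken in minimal position.

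\emph{Base cases.} If $k=0$, then $\gamma\in T$, the only $(T,\gamma)$-path is the one-step path $(\gamma)$, and (\ref{eqthm1}) reads $x_\gamma=x_\gamma$. If $k=1$, then $\gamma$ is the flip of the unique arc $\tau_j\in T$ it crosses. Conditions (T1)--(T5) force each $(T,\gamma)$-path to have length $3$, its middle arc to equal $\tau_j^\pm$ (the only arc of $T$ crossing $\gamma$), and its two outer arcs to be two sides of one of the two triangles of $T$ incident to $\tau_j$; there are exactly two such paths $\alpha,\alpha'$, with $x(\alpha)+x(\alpha')=(x_{\alpha_1}x_{\alpha_3}+x_{\alpha'_1}x_{\alpha'_3})/x_{\tau_j}$. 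Reading off the two triangles incident to $\tau_j$ identifies this with the exchange relation (\ref{intro 1}) for $\mu_j$: the arcs $\tau_i$, $i\le n$, that appear are exactly those with $b_{ij}\ne 0$, with the correct exponents, and the boundary arcs that appear are exactly the $\tau_i$, $i>n$, occurring in the tropical coefficients $p_j^\pm$.

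\emph{Inductive step.} Suppose $k\ge 2$. Let $\tau=\tau_j$ be the first arc of $T$ met by $\gamma$ (the one carrying the crossing point labelled $1$), and let $T'=\mu_j(T)$ be the flip of $T$ at $\tau$, with $\tau$ replaced by a new arc $\tau'$. Since $\tau$ is the side opposite $a=s(\gamma)$ in the triangle containing the initial segment of $\gamma$, the arc $\tau'$ has $a$ as an endpoint and $e(\tau',T)=1$. The geometric input is $e(\gamma,T')<k$: inside the quadrilateral $Q$ cut out by the two triangles of $T$ on $\tau$, minimality forces every component of $\gamma\cap Q$ to cross $\tau$ exactly once and hence to cross $\tau'$ at most once, while the initial segment of $\gamma$, which emanates from the endpoint $a$ of $\tau'$, does not cross $\tau'$; so $e(\gamma,\tau')\le e(\gamma,\tau)-1$ and $e(\gamma,T')=k-e(\gamma,\tau)+e(\gamma,\tau')<k$. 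By the inductive hypothesis applied to $\gamma$ in $(S,M)$ triangulated by $T'$, $x_\gamma=\sum_{\alpha\in\mathcal P_{T'}(\gamma)}x'(\alpha)$, where $x'(\alpha)$ is the monomial (\ref{**}) formed in the cluster of $T'$; and the base case $k=1$ applied to $\tau'$ gives $x_{\tau'}=\sum_{\beta\in\mathcal P_T(\tau')}x(\beta)$, equivalently the exchange relation $x_\tau x_{\tau'}=E$ for an explicit two-term polynomial $E$ in the remaining initial variables. Substituting $x_{\tau'}=E/x_\tau$ into $\sum_{\alpha}x'(\alpha)$ gives an expression in the cluster $\mathbf x$ of $T$ that must be shown to equal $\sum_{\alpha\in\mathcal P_T(\gamma)}x(\alpha)$.

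\emph{The combinatorial heart, and the main obstacle.} Geometrically, a $(T',\gamma)$-path and a $(T,\gamma)$-path coincide outside $Q$ and differ inside $Q$ by a ``flip of the path across $Q$'': a maximal sub-path lying in $Q$ --- in particular an occurrence of $\tau'$ --- is replaced by the matching sub-path drawn in the other triangulation of $Q$, which splices sides of $Q$ in place of $\tau'$. I would aim to turn this into a weight-preserving bijection between $\mathcal P_T(\gamma)$ and the set of pairs consisting of a path $\alpha\in\mathcal P_{T'}(\gamma)$ together with, at each occurrence of $\tau'$ at an odd position of $\alpha$, a choice of one of the two monomials of $E$, so that the substitution $x_{\tau'}=E/x_\tau$ recovers $\sum_{\alpha\in\mathcal P_T(\gamma)}x(\alpha)$. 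The two hard points, which I expect to dominate the proof, are: first, checking that conditions (T1)--(T5) --- the odd-length condition, the strictly increasing sequence $\mathbf{i}_\alpha$ of labelled crossings, and the homotopy requirement between successive labelled crossings --- are preserved in both directions of the splicing, with multiplicities tracked by hand, since Proposition \ref{prop 1} is only available for simply connected $S$; and second, since $e(\gamma,\tau')$ need not vanish, accounting for the occurrences of $\tau'$ at \emph{even} positions of $\alpha$, which contribute the factor $x_{\tau'}^{-1}=x_\tau/E$ and so introduce a priori non-Laurent terms --- one has to show that, once the whole sum is collected, these spurious denominators cancel. The coefficient bookkeeping is automatic throughout: boundary arcs occur only at odd positions of $(T,\gamma)$-paths, so they contribute the formal variables $x_i$, $i>n$, exactly as prescribed by the tropical coefficient structure of $\mathcal A(S,M)$.
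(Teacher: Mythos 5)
Your induction scheme (on $e(\gamma,T)$, flipping the triangulation at the first arc $\tau$ that $\gamma$ crosses and substituting $x_{\tau'}=E/x_\tau$) is genuinely different from the paper's, which keeps $T$ fixed throughout and instead applies the exchange relation to $\gamma$ itself: it takes the quadrilateral having $\gamma$ and $\tau_2$ (the arc of $T$ carrying $\gamma$'s first crossing) as diagonals, so that the two new arcs $\rho,\sigma$ appearing in $x_\gamma x_{\tau_2}=x_{\tau_1}x_\rho+x_{\tau_3}x_\sigma$ each cross $T$ strictly fewer times, and then exhibits explicit bijections $\mathcal P_T(\rho)_{\tau_2}\to\mathcal P_T(\gamma)_{\tau_1,-\tau_2}$ and $\mathcal P_T(\rho)_{-\tau_2}\to\mathcal P_T(\gamma)_{\tau_1\tau_2}$ that multiply weights by $x_{\tau_1}/x_{\tau_2}$. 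Because $\rho$ and $\sigma$ are expanded in the \emph{same} triangulation $T$, their path monomials are already Laurent monomials in the initial cluster and no substitution of a binomial ever occurs. The non-simply-connected case is then handled separately by lifting to the universal cover and pushing the identity down along an induced algebra homomorphism.

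As written, however, your argument has a genuine gap: the entire combinatorial heart is announced ("I would aim to turn this into a weight-preserving bijection", "one has to show that \dots these spurious denominators cancel") but not carried out, and one of the two difficulties you flag is structural rather than technical. When a $(T',\gamma)$-path uses $\tau'$ in an \emph{even} position -- which does happen, since $e(\gamma,\tau')$ need not vanish -- its weight carries $x_{\tau'}^{-1}$, and the substitution produces $x_\tau/E$ with $E$ a \emph{binomial} in the $T$-variables. The assembled expression is then a priori a rational function with $E$ in the denominator, and proving that these contributions recombine into the Laurent-monomial sum $\sum_{\alpha\in\mathcal P_T(\gamma)}x(\alpha)$ is essentially a positivity/cancellation statement of the same order of difficulty as the theorem itself; nothing in your sketch indicates how to organize that cancellation. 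The verification of (T1)--(T5) under the "splicing across $Q$", with multiplicities controlled on a non-simply-connected surface where Proposition \ref{prop 1} is unavailable and where the quadrilateral $Q$ around $\tau$ may have identified vertices or sides (cf.\ Figure \ref{fig bij=2}), is likewise left entirely open. The lesson of the paper's proof is precisely that one can choose the exchange quadrilateral so that no change of cluster variables is ever needed; your route forces such a change and thereby creates the obstruction you identify but do not overcome.
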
  

\begin{rem}\label{rem 2} \textup{If $S$ is simply connected, then
  \textup{Proposition \ref{prop 1}} implies that  
each $x(\za)$ is a reduced fraction whose denominator is a product of
cluster variables,
and that   each term   in the sum of equation 
  \textup{(\ref{eqthm1})}  appears with multiplicity one.}
\end{rem} 

The proof of Theorem \ref{thm 1} will be given in section \ref{sect 2}. To
illustrate the statement, we give two examples here.

\begin{example}{\bf The case $A_n$:} The cluster algebra
  $\mathcal{A}(S,M)$ is of type $A_n$ if $(S,M)$ is an
  $(n+3)$-gon. Our example illustrates the case $n=5$.
The following figure shows a triangulation $T=\{\tau_1,\ldots,\tau_{13}\}$ and
a (dotted) arc $\zg$. Next to it is a complete list
 of elements of $\mathcal{P}_T(\zg)$. 
\[\begin{array}{cc}
\def\alphanum{\ifcase \xypolynode \or 6 \or 7\or 8 \or 9\or 10\or
  11\or 12\or 13\fi}
\xy/r6pc/: {\xypolygon8"A"{~<<{@{}}~><{@{-}|@{>}}
~>>{_{\tau_{\alphanum}}}}},
\POS"A2" \ar@{-}|(0.7)@{<}^(0.7){\tau_1} "A4",
\POS"A4" \ar@{-}|@{<}_(0.5){\tau_2} "A6",
\POS"A6" \ar@{-}|(0.3)@{>}^(0.3){\tau_3} "A2",
\POS"A2" \ar@{-}|@{>}^{\tau_4} "A8",
\POS"A6" \ar@{-}|(0.3)@{<}^(0.3){\tau_5} "A8",
\POS"A3" \ar@{.}|(0.6)@{>}^(0.6){\zg} "A7",
\POS"A3"\drop{\begin{array}{c}a\ \\ \\ \end{array}  }
\POS"A7"\drop{\begin{array}{c} \\ \ b \end{array}  } 
\POS"A2"\drop{\begin{array}{c}\  f\\ \\ \end{array}  } 
\POS"A4"\drop{\begin{array}{c} c\quad \\ \end{array}  }
  \POS"A6"\drop{\begin{array}{c} \\d \ \ \end{array}  }
  \POS"A8"\drop{\begin{array}{c} \quad e \end{array}  }  
\endxy
&

\begin{array}{c}
 (\tau^-_7,\tau^-_3,\tau_{11})\, 
\\( \tau^-_7,\tau^-_1,\tau^-_2,\tau^-_5,\tau^-_{12})\\
( \tau_8,\tau_1,\tau_4,\tau_5,\tau_{11})
\\( \tau_8,\tau_1,\tau^-_3,\tau^-_5,\tau^-_{12})\\
( \tau^-_7,\tau^-_1,\tau^-_2,\tau_3,\tau_4,\tau_5,\tau_{11}) 
\end{array} 
\end{array}  \]
Theorem \ref{thm
  1} thus implies that 
\[ x_\zg=
\frac{x_7x_{11}}{x_3} +
\frac{x_7x_2 x_{12}}{x_1x_5} +
\frac{x_8x_4 x_{11}}{x_1x_5} +
\frac{x_8x_3 x_{12}}{x_1x_5} +
\frac{x_7x_2x_4 x_{11}}{x_1x_3x_5} .
\]
\end{example}

\begin{example}{\bf The case $\tilde A_{n-1}$:}
 The cluster algebra
  $\mathcal{A}(S,M)$ is of type $\tilde A_{n-1}$ if $(S,M)$ is an
  annulus. Our example illustrates the case $n=4$. Figure
  \ref{figaffine} shows a triangulation
  $T=\{\tau_1,\tau_2,\ldots,\tau_8\}$ and a (dotted) arc $\zg$.
The complete list of elements of $\mathcal{P}_T(\zg) $ is as follows:
\[
\begin{array}{ll}
(\tau_5,\tau_1^-,\tau_8,\tau_3,\tau_4,\tau_1,\tau_2)
&(\tau_5,\tau_1^-,\tau_8,\tau_3,\tau_5,\tau_1^-,\tau_8)\\
(\tau_5,\tau_1^-,\tau_8,\tau_2^-,\tau_6,\tau_4,\tau_8)
&(\tau_5,\tau_1^-,\tau_8,\tau_2^-,\tau_6,\tau_3^-,\tau_7,\tau_1,\tau_2)\\
(\tau_5,\tau_1^-,\tau_8,\tau_2^-,\tau_6,\tau_3^-,\tau_7,\tau_4^-,\tau_5,\tau_1^-,\tau_8)
&(\tau_5,\tau_2,\tau_3,\tau_4,\tau_8)\\
(\tau_5,\tau_2,\tau_7,\tau_1,\tau_2)
&(\tau_5,\tau_2,\tau_7,\tau_4^-,\tau_5,\tau_1^-,\tau_8)\\
(\tau_4,\tau_1,\tau_2,\tau_3,\tau_4,\tau_1,\tau_2)
&(\tau_4,\tau_1,\tau_2,\tau_3,\tau_5,\tau_1^-,\tau_8)\\
(\tau_4,\tau_1,\tau_6,\tau_3^-,\tau_7,\tau_1,\tau_2)
&(\tau_4,\tau_1,\tau_6,\tau_3^-,\tau_7,\tau_4^-,\tau_5,\tau_1^-,\tau_8)\\
(\tau_4,\tau_1,\tau_6,\tau_4,\tau_8)\\
\end{array}  
\]
Hence Theorem \ref{thm 1} implies
\[
\begin{array}{rcl}
x_\zg &=& \displaystyle\frac{x_5x_8x_4x_2}{x_1x_3x_1}
+\frac{x_5x_8x_5x_8}{x_1x_3x_1}
+\frac{x_5x_8x_6x_8}{x_1x_2x_4}
+\frac{x_5x_8x_6x_7x_2}{x_1x_2x_3x_1}
+\frac{x_5x_8x_6x_7x_5x_8}{x_1x_2x_3x_4x_1}\\
\\
&& \displaystyle+\frac{x_5x_3x_8}{x_2x_4}
+\frac{x_5x_7x_2}{x_2x_1}
+\frac{x_5x_7x_5x_8}{x_2x_4x_1}
+\frac{x_4x_2x_4x_2}{x_1x_3x_1}
+\frac{x_4x_2x_5x_8}{x_1x_3x_1}
+\frac{x_4x_6x_7x_2}{x_1x_3x_1}\\
\\
&& \displaystyle
+\frac{x_4x_6x_7x_5x_8}{x_1x_3x_4x_1}
+\frac{x_4x_6x_8}{x_1x_4},
\end{array}  
\]
which can be simplified to
\[
\begin{array}{rcl}
x_\zg  &=& \displaystyle
2\frac{x_2x_4x_5x_8}{x_1^2x_3}
+\frac{x_5^2x_8^2}{x_1^2x_3}
+\frac{x_5x_6x_8^2}{x_1x_2x_4}
+2\frac{x_5x_6x_7x_8}{x_1^2x_3}
+\frac{x_5^2x_6x_7x_8^2}{x_1^2x_2x_3x_4}\\
\\
&& \displaystyle+\frac{x_3x_5x_8}{x_2x_4}
+\frac{x_5x_7}{x_1}
+\frac{x_5^2x_7x_8}{x_1x_2x_4}
+\frac{x_2^2x_4^2}{x_1^2x_3}
+\frac{x_2x_4x_6x_7}{x_1^2x_3}
+\frac{x_6x_8}{x_1}.
\end{array}  
\]

\begin{figure}
\begin{center}
\input{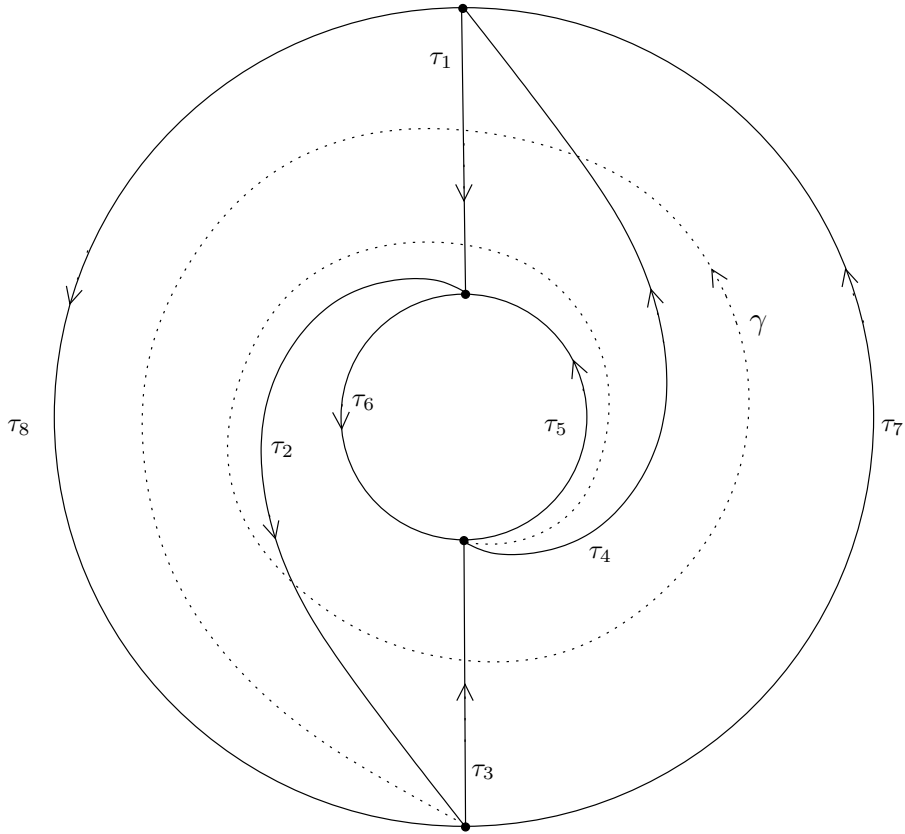}
\caption{The case $\tilde A_{n-1}$}\label{figaffine}
\end{center}
\end{figure}   
\end{example}

\end{subsection} 

\begin{subsection}{Positivity}
The following positivity
conjecture of \cite{FZ1} is a direct 
consequence of Theorem \ref{thm 1}.

\begin{cor}\label{cor 1} Let $(S,M) $ be an
  unpunctured surface. Let $x$ be any cluster variable in the cluster
 algebra $\mathcal{A}(S,M)$, and let 
 $\{x_1,\ldots,x_n\}$ be any cluster. Let
\[x=\frac{f(x_1,\ldots,x_n,x_{n+1},\ldots,x_{n+m})}{x_1^{d_1}\ldots x_n^{d_n}}\] be the
expansion of $x$ in the cluster $\{x_1,\ldots,x_n\}$, where $f$ is a
polynomial which is not divisible by any of the $x_1,\ldots,x_n$.
Then 
\begin{itemize}
\item[(a)] the coefficients of $f$ are non-negative integers,
\item[(b)] if $S$ is simply connected, the coefficients of $f$ are  either $0$ or $1$.
\end{itemize}   
\end{cor}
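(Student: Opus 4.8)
The plan is to derive Corollary~\ref{cor 1} directly from the expansion formula of Theorem~\ref{thm 1}. The key point is that Theorem~\ref{thm 1} expresses any cluster variable $x_\zg$ as a sum $\sum_{\za\in\mathcal{P}_T(\zg)}x(\za)$, where by \eqref{**} each term $x(\za)=\prod_{i\textup{ odd}}x_{\za_i}\prod_{i\textup{ even}}x_{\za_i}^{-1}$ is a Laurent \emph{monomial} in the variables $x_{\tau_1},\ldots,x_{\tau_N}$ with \emph{all coefficients equal to $1$} and nonnegative exponents on the boundary variables $x_{n+1},\ldots,x_{n+m}$ (these can only appear with positive, odd-position exponents since boundary arcs never cross $\zg$, so they can never sit in an even position). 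First I would fix an arbitrary cluster $\{x_1,\ldots,x_n\}$, which by \cite{FST} corresponds to a triangulation $T$ of $(S,M)$; applying Theorem~\ref{thm 1} with this $T$ writes $x=x_\zg$ as a sum of Laurent monomials with coefficients in $\{1\}$ (a priori, before collecting like terms). Clearing denominators by multiplying through by $x_1^{d_1}\cdots x_n^{d_n}$ — where $d_i$ is the maximum over $\za\in\mathcal{P}_T(\zg)$ of the exponent of $x_i$ in the denominator of $x(\za)$ — then yields $f$ as a sum of genuine monomials (with positive coefficients), so part~(a) follows once we check the reduced-fraction claim, namely that $f$ is not divisible by any $x_i$; but this is immediate from the choice of $d_i$ as a maximum, since for each $i$ some $\za$ contributes a term with $x_i$-exponent exactly $d_i$ in the denominator, and that monomial survives (it cannot be cancelled, as all coefficients are positive), giving a monomial of $f$ with $x_i$-exponent $0$.

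For part~(b), the additional input is Proposition~\ref{prop 1}: when $S$ is simply connected, part~(c) of that proposition states that the map $\za\mapsto x(\za)$ is injective, and part~(b) states that each arc occurs at most once in any $(T,\zg)$-path, so that each $x(\za)$ is already a reduced fraction. Consequently no two distinct paths $\za,\za'$ contribute the same Laurent monomial, i.e.\ the sum in \eqref{eqthm1} has no like terms to collect, and after clearing denominators the polynomial $f$ is a sum of \emph{distinct} monomials each with coefficient $1$. This is exactly the statement that every coefficient of $f$ is $0$ or $1$. (Remark~\ref{rem 2} already records precisely this observation, so part~(b) is essentially a restatement of it.)

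The only genuine subtlety — and the step I would be most careful about — is verifying that $f$ as produced above is indeed \emph{the} reduced numerator referred to in the statement, i.e.\ that $\gcd$ with each $x_i$ is trivial and that the $d_i$ obtained match the $d_i$ in the hypothesis. For the simply connected case this is transparent from injectivity; in general one must rule out the possibility that, after collecting like terms with positive coefficients, some $x_i$ nonetheless divides the collected numerator. This cannot happen precisely because all coefficients in the collected sum are strictly positive integers: a monomial achieving the maximal denominator exponent $d_i$ for $x_i$ produces, after clearing denominators, a monomial of $f$ in which $x_i$ does not appear, and positivity of coefficients prevents cancellation of that monomial against any other. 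Hence the expansion we obtain is automatically in lowest terms, it agrees with the expansion in the statement of the corollary by uniqueness of the reduced form \eqref{intro 2}, and both~(a) and~(b) follow. I would also note explicitly that boundary variables $x_{n+1},\ldots,x_{n+m}$ never appear in any denominator, consistent with the form of the expansion in the corollary.
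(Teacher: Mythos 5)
Your proof is correct and follows essentially the same route as the paper: part (a) is read off from Theorem \ref{thm 1} because each $x(\za)$ is a Laurent monomial with coefficient $1$, and part (b) is exactly the content of Remark \ref{rem 2} (i.e.\ Proposition \ref{prop 1}(b),(c)). You simply make explicit the bookkeeping about clearing denominators and non-cancellation that the paper leaves implicit; note also that even in the edge case where your constructed numerator were divisible by some $x_i$, passing to the genuinely reduced form only divides by a monomial and so does not affect the coefficients.
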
  

\begin{pf} (a) is a direct consequence of Theorem \ref{thm 1}, and (b)
  follows from Remark \ref{rem 2}.
\qed
\end{pf}  

\end{subsection}
\end{section}

\begin{section}{Proof of Theorem \ref{thm 1}}\label{sect 2}

\begin{subsection}{The simply connected case}\label{sect simply connected}
Assume that $S$ is simply connected.
 Let $T=\{\tau_1,\ldots,\tau_N\}$, with  $N=n+m$, be a
triangulation of $S$  and let $\zg$ be an arc in $S$. Choose an
orientation of $\zg$ and let $a$ be its starting point and $b$ be its
endpoint. Suppose that $\zg\notin T$.
Proposition \ref{prop 1} implies that every arc in $T$ crosses $\zg$
 at most once.
 Among all arcs of $T$ that cross
$\zg$, there is a unique one, say $\tau_2$, such that its crossing point
with $\zg$ is the closest possible to the vertex $a$. 
Then there is a
unique triangle in $T$ having $\tau_2$ as one side and the vertex $a$
 as third point. Denote the other two sides of this triangle by
$\tau_1 $ and $\tau_3$ and let $c$ be the common endpoint of
$\tau_3$ and $\tau_2$, and  $d$  the common endpoint of
$\tau_1$ and $\tau_2$ (see Figure \ref{fig 3}). Note that $\tau_1,
  \tau_3$ may be boundary arcs.
\begin{figure}
\begin{center}
\input{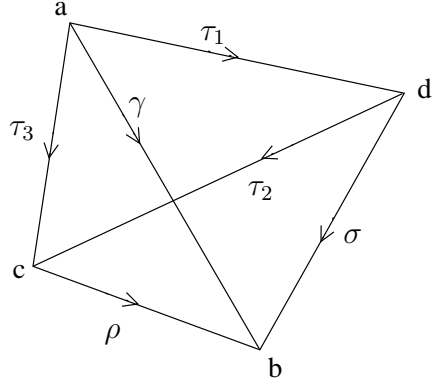}
\caption{Proof of Theorem \ref{thm 1}}\label{fig 3}
\end{center}
\end{figure}   
Now consider the unique quadrilateral  in which $\zg$ and
$\tau_2$ are the diagonals. Two of its sides are $\tau_1$ and
$\tau_3$. Denote the other two sides by $\rho$ and
$\zs$ in such a way that $\rho$ is the side opposite to $\tau_1$
(see Figure \ref{fig 3}). We assume without loss of generality that
the orientations of the arcs $\tau_1,\tau_2,\tau_3,\rho$ and $\zs$ are
as in Figure \ref{fig 3}. In particular
$s(\tau_1)=s(\tau_3)=a $ and  $s(\tau_2)=t(\tau_1)$. 
We will keep this setup for the rest of this
subsection.

\begin{lem}\label{lem 0}
\begin{itemize}
\item[(a)] If $\tau_i\in T$ crosses $\rho$ (respectively $\zs$), then $\tau_i$
  crosses $\zg$.
\item[(b)]  If $\tau_i\in T$ is incident to $a$, then $\tau_i$
  crosses neither $\rho$ nor $\zs$.
\item[(c)]  If $\tau_i\in T$ crosses $\zg$ and does not cross $\rho$
  (respectively $\zs$), then $\tau_i$ is incident to $c$ (respectively $d$).
\end{itemize}   
\end{lem}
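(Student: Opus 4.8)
The plan is to work entirely in the picture of Figure \ref{fig 3}, exploiting the fact that (by Proposition \ref{prop 1}(a)) in a simply connected surface any two arcs cross at most once, so the relevant configurations are rigid. All three statements concern the quadrilateral $Q$ with diagonals $\zg$ and $\tau_2$ and sides $\tau_1,\tau_3,\rho,\zs$, together with the triangle $\Delta$ with vertices $a,c,d$ and sides $\tau_1,\tau_2,\tau_3$. The key observation to extract first is a topological one: $\rho$ and $\zs$ each lie inside $Q$, the arc $\zg$ also lies inside $Q$ (near $a$ it is trapped in the triangle $\Delta$ since $\tau_2$ is the first arc $\zg$ meets), and $Q$ is itself cut into the two triangles bounded by $\tau_2$, so a crossing of $\tau_i$ with $\rho$ or $\zs$ forces $\tau_i$ to enter $Q$.

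For (a), suppose $\tau_i$ crosses $\rho$. Then $\tau_i$ enters the interior of $Q$ through $\rho$. Since $\tau_i\in T$ and all arcs of $T$ other than the four sides and the diagonal $\tau_2$ are disjoint from $\partial Q$ and from $\tau_2$ except possibly at endpoints, and $\tau_i\ne\tau_2$ (as $\tau_2$ does not cross $\rho$), $\tau_i$ must be contained, near this crossing, in one of the two sub-triangles of $Q$; to leave $Q$ it must either exit through a side of $Q$ other than $\rho$ (impossible since that side is in $T$ and compatible with $\tau_i$, so no crossing) or it must cross $\tau_2$ (impossible, same reason) or it must cross the diagonal $\zg$. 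The only remaining possibility is that $\tau_i$ crosses $\zg$, which is what we want. A careful bookkeeping of "which triangle of the triangulation of $Q$ is $\tau_i$ in" gives the argument; the case of $\zs$ is symmetric.

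For (b), if $\tau_i$ is incident to $a$, then near $a$, $\tau_i$ emanates from the corner of $\Delta$ at $a$; the sides $\rho$ and $\zs$ of $Q$ are the two sides \emph{not} adjacent to $a$ (one is opposite $\tau_1$, the other opposite $\tau_3$), so they are "across" the quadrilateral from $a$. If $\tau_i$ crossed $\rho$ or $\zs$, then by part (a) it would cross $\zg$, and moreover it would have to pass from the corner $a$ through the opposite side of $Q$; but then $\tau_i$ and $\zg$ would be two arcs both incident to $a$ (note $\zg$ is incident to $a$) that cross, which by Proposition \ref{prop 1}(a) they cannot, or one gets a contradiction with the minimality defining $\tau_2$ (the crossing point of $\tau_i$ with $\zg$ would be closer to $a$ than that of $\tau_2$). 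For (c), if $\tau_i$ crosses $\zg$ but not $\rho$, consider where $\tau_i$ meets $Q$: its crossing point with $\zg$ lies in $Q$, $\tau_i$ cannot cross $\rho$, $\tau_2$, or the two remaining sides $\tau_1,\tau_3$ (the latter because they are in $T$); so both "ends" of the arc of $\tau_i$ inside $Q$ must terminate at vertices of $Q$, and since it cannot cross $\rho$, both ends lie on the side of $\zg$ not containing $\rho$, which forces an endpoint of $\tau_i$ to be the vertex $c$ shared by $\tau_2,\tau_3,\rho$-region; the statement for $\zs$ and $d$ is symmetric.

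\textbf{Main obstacle.} The real work is entirely in making the intuitive "$\tau_i$ is trapped inside the quadrilateral and must exit through $\zg$" argument rigorous, i.e. setting up cleanly the fact that the five arcs $\tau_1,\tau_2,\tau_3,\rho,\zs$ bound (and one diagonal subdivides) a region of $S$ that contains the relevant portion of $\zg$, and that any arc of $T$ crossing one of $\rho,\zs$ enters this region and can only leave it across $\zg$. Because $S$ is simply connected and crossing numbers are at most $1$, there is no room for pathologies, but the argument has several small cases (which side $\tau_i$ enters through, which sub-triangle it sits in, whether $\tau_1$ or $\tau_3$ is a boundary arc) that must each be dispatched using compatibility of arcs in $T$ and Proposition \ref{prop 1}(a).
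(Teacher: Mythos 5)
Your overall strategy -- trap $\tau_i$ in the regions cut out by the quadrilateral of Figure \ref{fig 3} and its diagonals, and use compatibility of arcs of $T$ together with $e\le 1$ -- is the same as the paper's (the paper dispatches this lemma in one line, citing simple connectedness and the fact that $\tau_i$ does not cross $\tau_2$). However, as written your argument has one concrete error and one genuine gap. The error: you repeatedly treat all sides of the quadrilateral other than $\rho$ as arcs of $T$ (``impossible since that side is in $T$ and compatible with $\tau_i$''). Only $\tau_1$ and $\tau_3$ are sides of a triangle of $T$; the opposite sides $\rho$ and $\zs$ are in general \emph{not} in $T$ (indeed the whole induction in the proof of Theorem \ref{thm 1} rests on $\rho$ and $\zs$ still crossing $T$, just fewer times than $\zg$). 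So ``$\tau_i$ cannot exit through $\zs$ because $\zs\in T$'' is not a valid step; what saves the argument is that a path inside $Q$ from $\rho$ to $\zs$ must cross $\zg$ or $\tau_2$, which is a different reason.

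The gap: in (a) your dichotomy is ``$\tau_i$ must leave $Q$ through a side or a diagonal,'' but an arc entering $Q$ through $\rho$ need not leave $Q$ at all -- it can terminate at one of the marked points $a,b,c,d$ (these are the only marked points in $Q$, its interior lying in the interior of $S$). Ruling these out is where the real content of the lemma sits, and you never do it. Termination at $b$ or $c$ is impossible because $\tau_i$ would then share an endpoint with $\rho$, and in a simply connected surface two arcs with a common endpoint bound a bigon at any crossing, so $e(\tau_i,\rho)=0$ -- note this shared-endpoint fact is \emph{not} Proposition \ref{prop 1}(a) (which only gives $e\le 1$), although you invoke it as such in part (b); it needs the bigon/innermost-disk argument. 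Termination at $a$ is impossible because, $\tau_2$ being the first arc crossed by $\zg$, the wedge at $a$ between $\zg$ and $\tau_3$ lies inside the triangle of $T$ with sides $\tau_1,\tau_2,\tau_3$, whose interior meets no arc of $T$; termination at $d$ forces a crossing with $\zg$ anyway since $d$ and $\rho$ lie on opposite sides of $\zg$ in $Q$. The same omissions infect (c): the claim that ``both ends lie on the side of $\zg$ not containing $\rho$'' is false (the two ends of $\tau_i\cap Q$ lie on \emph{opposite} sides of $\zg$, and $c$ is on the $\rho$-side); the correct statement is that the end of $\tau_i$ on the $\rho$-side of $\zg$ is trapped in the triangle bounded by $\zg,\rho,\tau_3$ and, after excluding $a$ and $b$ as above, must terminate at $c$.
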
  

\begin{pf} Since $S$ is simply connected, this follows directly from
  the construction and the fact that $\tau_i$ does not cross $\tau_2$.
\qed
 \end{pf}
 
Let $\mathcal{P}_T(\zg)_{\tau_j}$ denote the subset of $\mathcal{P}_T(\zg)
$  of all 
$(T,\zg)$-paths $\za$ that start with the arc $\tau_j^\pm$ and let
$\mathcal{P}_T(\zg)_{-\tau_j}$ be the subset of $\mathcal{P}_T(\zg) $
 of all 
$(T,\zg)$-paths $\za$ that do not contain  the arc $\tau_j^\pm$.
Similarly, let $\mathcal{P}_T(\zg)_{\tau_j \tau_\ell}$ denote the subset of
$\mathcal{P}_T(\zg)_{\tau_j} $   of all
$(T,\zg)$-paths $\za$ that start with the arcs $\tau^\pm_j\tau^\pm_\ell$ and let
$\mathcal{P}_T(\zg)_{\tau_j,-\tau_\ell}$ be the subset of
$\mathcal{P}_T(\zg)_{\tau_j} $ of all   
$(T,\zg)$-paths $\za$ that start with the arc $\tau_j^\pm$ and do not contain
the arc $\tau_\ell^\pm$.

\begin{lem}\label{lem 5} We have
$\mathcal{P}_T(\zg)=\mathcal{P}_T(\zg)_{\tau_1}\sqcup
\mathcal{P}_T(\zg)_{\tau_3}$. 
\end{lem}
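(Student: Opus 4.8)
I would prove this in two steps. First, the two sets on the right are disjoint: a $T$-path has a well-defined first arc $\za_1$, and since $(S,M)$ is unpunctured the three sides $\tau_1,\tau_2,\tau_3$ of the triangle $\Delta$ are pairwise distinct, so $\za_1$ cannot be simultaneously one of $\tau_1^\pm$ and one of $\tau_3^\pm$. The real content is the second step: to show that the first arc of any $(T,\zg)$-path $\za=(\za_1,\dots,\za_{\ell(\za)})$ lies in $\{\tau_1^\pm,\tau_3^\pm\}$. In fact I would argue the sharper statement that $\za_1$ is $\tau_1$ or $\tau_3$, oriented away from $a$, since $\tau_1^-$ and $\tau_3^-$ start at $d$ and $c$, not at $a$.

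To do this I would first fix the local picture at $a$. Since $s(\za_1)=s(\zg)=a$, the underlying arc of $\za_1$ belongs to $T$ and is incident to $a$. Reading from one boundary arc at $a$ to the other, the arcs of $T$ incident to $a$ occur in a linear order in which $\tau_1$ and $\tau_3$ are consecutive: the sector between them is exactly $\Delta$, and $\zg$ leaves $a$ into $\Delta$ (recall $\tau_2$, the side of $\Delta$ opposite $a$, is by construction the arc of $T$ whose crossing with $\zg$ is closest to $a$, so the initial segment of $\zg$ lies inside $\Delta$). Hence, if $\za_1\notin\{\tau_1,\tau_3\}$, then $\za_1$ lies strictly to one side of $\Delta$ in this order, say beyond $\tau_1$; in that case $\tau_1$ must be a genuine arc of $T$ and not a boundary arc, since nothing lies beyond a boundary arc, and the symmetric case "beyond $\tau_3$" would be handled identically with $\tau_3$ playing the role of $\tau_1$.

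The heart of the argument would then be to trap $\za_2$ on the wrong side of $\tau_1$, using that $S$ is simply connected. The arc $\tau_1$ separates the disk $S$ into two open pieces; let $S_1$ be the one containing $\Delta$, and $S_2$ the other. Since $\zg$ is incident to the endpoint $a$ of $\tau_1$, we have $e(\zg,\tau_1)=0$, so $\zg$ does not cross $\tau_1$; and since $\zg$ leaves $a$ into $\Delta$, its relative interior --- hence every crossing point of $\zg$ with $T$ --- lies in $S_1$. On the other hand $\za_1$, lying beyond $\tau_1$, is contained in $\overline{S_2}$, and $t(\za_1)\neq a$ (an arc has distinct endpoints) while $t(\za_1)\neq d=t(\tau_1)$ (in a disk any two marked points are joined by at most one arc of $T$, so $\tau_1$ is the only $T$-arc from $a$ to $d$); hence $s(\za_2)=t(\za_1)$ lies in $S_2$. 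Since $\zg\notin T$ there is no $(T,\zg)$-path of length one, so $\ell(\za)\ge 3$ and $\za_2$ exists; by (T2) it crosses $\zg$, so $\za_2$ meets $S_1$. But $\za_2$ is, up to orientation, an arc of $T$, so it does not cross $\tau_1$, and starting in $S_2$ it is therefore contained in $\overline{S_2}$ --- which is disjoint from $S_1$, a contradiction. This forces $\za_1\in\{\tau_1,\tau_3\}$, and, combined with disjointness, gives $\mathcal{P}_T(\zg)=\mathcal{P}_T(\zg)_{\tau_1}\sqcup\mathcal{P}_T(\zg)_{\tau_3}$.

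The part I expect to require the most care is the treatment of degenerate configurations: verifying that $\zg\notin T$ genuinely forbids a length-one $(T,\zg)$-path (so that $\za_2$ is available for the trapping argument), and checking that the dichotomy "beyond $\tau_1$" versus "beyond $\tau_3$" behaves correctly when one or both of $\tau_1,\tau_3$ is itself a boundary arc, in which case one of the two cases is vacuous and $\za_1$ is forced immediately to be the remaining side of $\Delta$. Everything else rests only on simple connectivity of $S$ and on the fact that distinct arcs of a triangulation do not cross.
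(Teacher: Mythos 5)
Your proof is correct, but it runs on a different engine than the paper's. The paper's argument pivots on $\tau_2$ and on axiom (T5): since $\za$ and $\zg$ are homotopic between $a$ and the crossing point $i_2$, and $\zg$ crosses $\tau_2$ before reaching $i_2$, the initial portion of $\za$ must meet $\tau_2$; as $\za_1$ is a $T$-arc it cannot cross $\tau_2$ transversally, so either $\za_2=\tau_2^\pm$ or $t(\za_1)\in\{c,d\}$, and in either case $\za_1$ joins $a$ to $c$ or to $d$ and is therefore $\tau_1^\pm$ or $\tau_3^\pm$. You instead pivot on $\tau_1$ and $\tau_3$ and use only (T2): if $\za_1$ were any other arc incident to $a$, it would terminate strictly on the far side of the separating arc $\tau_1$ (or $\tau_3$), trapping $\za_2$ in the component of the disk cut off by that arc, which is disjoint from the relative interior of $\zg$, so $\za_2$ could not cross $\zg$. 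Your route is somewhat longer because it must dispose of the degenerate configurations you flag ($t(\za_1)\ne a,d$, boundary-arc sides of the triangle, nonexistence of length-one paths when $\zg\notin T$), all of which do check out in a disk; what it buys is that it invokes only (T2) together with an elementary separation argument, rather than the homotopy condition (T5). The paper's version is shorter and matches the style of the surrounding lemmas. Both arguments ultimately rest on the same consequences of simple connectivity: uniqueness of the $T$-arc between two marked points, the vanishing of $e$ for arcs sharing an endpoint, and the ability to put $\zg$ in minimal position with all of $T$ at once.
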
 

\begin{pf}
Let $\za=({\za_1},{\za_2},\ldots,{\za_{\ell(\za)}})$ be an
arbitrary element of $\mathcal{P}_T(\zg)$. 
Then $s(\za_1)=a $, and $\za_2$ crosses $\zg$ at the
crossing point $i_2$  of ${\bf i}_\za$. Condition (T5) implies
that $\za$ and $\zg$ are homotopic between $a$ and $i_2$. On the other hand, $\zg$ crosses $\tau_{2}$ at
the crossing point $1\le i_2$, and thus,  $\za_1$ must intersect
$\tau_2$ either in the interior of $S$ or on the boundary. Therefore,
either $\za_2=\tau_2$ or $t(\za_1) \in\{c,d\}$, and in
both cases we must have  $\za_1\in\{\tau^\pm_1,\tau^\pm_3\}.$
\qed
\end{pf}

\begin{lem}\label{cor 2}
We have
\begin{itemize} 
\item[(a)] $\mathcal{P}_T(\zg)_{\tau_1}=\mathcal{P}_T(\zg)_{\tau_1\tau_2}
  \sqcup \mathcal{P}_T(\zg)_{\tau_1,-\tau_2}$,
\item[(b)] $\mathcal{P}_T(\zg)_{\tau_3}=\mathcal{P}_T(\zg)_{\tau_3\tau_2}
  \sqcup \mathcal{P}_T(\zg)_{\tau_3,-\tau_2}$.
\end{itemize}   
\end{lem}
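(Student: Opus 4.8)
The plan is to prove both statements simultaneously, since they are symmetric in $\tau_1\leftrightarrow\tau_3$ (with the roles of $\rho$ and $\sigma$ exchanged). It suffices to show, for any $(T,\zg)$-path $\za$ starting with $\tau_1^\pm$, that either $\za$ contains $\tau_2^\pm$ and in fact $\za_2=\tau_2^\pm$, or $\za$ does not contain $\tau_2^\pm$ at all; the two cases are obviously mutually exclusive, giving the disjoint union. The first thing I would do is fix $\za=(\za_1,\ldots,\za_{\ell(\za)})\in\mathcal{P}_T(\zg)_{\tau_1}$ with its sequence ${\bf i}_\za$, so $\za_1=\tau_1^\pm$ and $t(\za_1)=d$ (the common endpoint of $\tau_1$ and $\tau_2$, using the orientation conventions of Figure \ref{fig 3}). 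Then $\za_2$ starts at $d$ and, by (T2), $\za_2$ crosses $\zg$.

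The key step is to argue that the crossing point $i_2$ of ${\bf i}_\za$ lying on $\za_2$ must be the point $1$, i.e.\ the crossing of $\tau_2$ with $\zg$ closest to $a$. Here is the idea: by (T5), $\za$ and $\zg$ are homotopic between $a=i_0$ and $i_2$; the portion of $\zg$ from $a$ up to its first crossing point $1$ (on $\tau_2$) lies inside the triangle with sides $\tau_1,\tau_2,\tau_3$ and vertex $a$. Since $\za$ goes $a\to d$ along $\tau_1$ and then reaches $i_2$ along $\za_2$, and this concatenation is homotopic (rel endpoints) to an initial segment of $\zg$ that stays in that triangle and crosses no arc before crossing $\tau_2$ at $1$, the crossing point $i_2$ must be $1$, hence $\za_2$ is an arc passing through $1$. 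By Proposition \ref{prop 1}(a), $1$ is the \emph{only} crossing point on $\tau_2$, so any arc through $1$ is $\tau_2$ unless it is some other arc — but an arc through the point $1$ on $\zg$ that is not $\tau_2$ would give an extra crossing point on $\zg$ at the same location, contradicting that crossings are counted with multiplicity and $\tau_2$ already accounts for it; more carefully, $\za_2$ starts at $d$, crosses $\zg$ at $i_2=1$, and then by (T5) must continue consistently with $\zg$, which forces $\za_2=\tau_2$ or lands us in the situation where $\za_2\ne\tau_2^\pm$ and $i_2>1$. This dichotomy — $\za_2=\tau_2^\pm$ versus $i_2>1$ — is exactly what I want.

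With that in hand, I claim: if $\za_2\ne\tau_2^\pm$ then $\za$ contains no $\tau_2^\pm$ at all. Suppose for contradiction $\za_j=\tau_2^\pm$ for some even $j>2$ (it must be even, since by (T3) and $e(\tau_2,\zg)=1$ the arc $\tau_2$ can appear at most once as an even step, and only even steps can cross $\zg$; an odd occurrence is impossible because an odd arc between two even arcs lies in a region not meeting $\zg$ at that location — one invokes Proposition \ref{prop 1}(b), any arc occurs at most once). Then the crossing point $i_j$ on $\za_j=\tau_2$ equals $1$, the unique crossing point on $\tau_2$. But $i_2<i_j$ by (T4), while $i_2$ is a crossing point on $\zg$ with $i_2\ge 1$, forcing $i_2<1$, which is impossible since $1$ is the smallest label. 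Hence no such $j$ exists, so $\za\in\mathcal{P}_T(\zg)_{\tau_1,-\tau_2}$. Conversely, if $\za_2=\tau_2^\pm$ then trivially $\za\in\mathcal{P}_T(\zg)_{\tau_1\tau_2}$. This establishes (a); part (b) is identical with $\tau_3$ in place of $\tau_1$ and $t(\tau_3)=c$ in place of $d$.

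The main obstacle I anticipate is making the homotopy argument of the second paragraph fully rigorous: one must be careful about what ``homotopic between $i_j$ and $i_\ell$'' in (T5) allows, and in particular one must rule out that $\za_1=\tau_1^\pm$ followed by some long $\za_2$ could reach a crossing point $i_2>1$ while still being homotopic to an initial arc of $\zg$. The clean way is to observe that $\zg$ restricted from $a$ to its crossing point $1$ together with a piece of $\tau_1$ bounds a disk (a triangle) in the simply connected surface, so any path from $a$ homotopic to this initial segment of $\zg$ is trapped; combined with Lemma \ref{lem 0} controlling which arcs can be incident to $a$ or cross $\rho,\sigma$, this pins down $\za_2$. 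Once the ``$i_2=1$ or $\za_2\ne\tau_2$'' dichotomy is secured, the rest is the bookkeeping with (T3), (T4) and Proposition \ref{prop 1} sketched above.
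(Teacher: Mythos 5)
Your proposal is correct and its essential content --- that $\tau_2$ carries the crossing point labelled $1$, so by the monotonicity required in (T4) it can occur as an even arc only in position $2$ --- is exactly the paper's (one-line) proof of this lemma. The homotopy discussion in your second paragraph is superfluous, since the dichotomy ``$\za_2=\tau_2^\pm$ if and only if $i_2=1$'' follows immediately from Proposition \ref{prop 1}(a) (the point labelled $1$ lies on no arc of $T$ other than $\tau_2$), after which the (T4) bookkeeping in your third paragraph is all that is needed.
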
   %

\begin{pf} By construction, $\tau_2$ is the arc of the triangulation $T$ such
  that its crossing point with $\zg$ is the closest possible to the
  vertex $a$. Hence, the result follows from condition (T4).
\qed
\end{pf}  

Let $\rho$ be as above (see Figure \ref{fig 3}). We will construct two
maps $f:\mathcal{P}_T(\rho)_{\tau_2} \to \mathcal{P}_T(\zg) $ and
$g:\mathcal{P}_T(\rho)_{-\tau_2} \to \mathcal{P}_T(\zg).$

Let $\zb=(\zb_1,\ldots,\zb_{\ell(\zb)})$ be
any path in $\mathcal{P}_T(\rho)$.
Suppose first that $\zb_1=\tau^-_2$, thus $t(\zb_1)=d$. 
In this case, let $f(\zb)$ be the path in $\mathcal{P}_T(\zg)$
obtained from 
$\zb$ by replacing the first arc $\tau^-_2$ by $\tau_1$, that is 
\[f(\zb)=(\tau_1,\zb_2,\ldots,\zb_{\ell(\zb)}).\]
Suppose now that $\zb_j\ne \tau_2^\pm$ for all $j$. In this case, let $g(\zb)$ be the
composition of the paths $(\tau_1,\tau_2)$ and
$\zb$, that is 
\[g(\zb)=(\tau_1,\tau_2,\zb_1,\zb_2,\ldots,\zb_{\ell(\zb)}).\] 
Let us check that $f(\zb)$ and $g(\zb)$ are elements of
$\mathcal{P}_T(\zg)$.
 Indeed, the fact that $f(\zb)$ and $g(\zb)$ are $T$-paths is
 immediate; they are reduced by construction,
property (T1) is clear  and (T2)
follows from Lemma \ref{lem 0}(a). In order to show (T3), we need to
prove that $\zb_j\ne\tau_2^\pm$, for all even $j$; but this follows from
the fact that for even $j$, $\zb_j$ crosses $\rho$.
The condition (T4) holds  since the crossing point of  $\tau_2$ 
and $\zg$ is the closest possible to $a$, and (T5) holds, since
$S$ is simply connected.
We have the following lemma.
\begin{lem}\label{lem f}
The maps $f$ and $g$  induce  bijections 
\[
\begin{array}{ccccccccc}
 f:\mathcal{P}_T(\rho)_{\tau_2}\to
 \mathcal{P}_T(\zg)_{\tau_1,-\tau_2}&\quad and \quad&
 g:\mathcal{P}_T(\rho)_{-\tau_2}\to \mathcal{P}_T(\zg)_{\tau_1\tau_2},
\end{array}  \]
and 
\begin{equation}\label{eq 2}
  x(f(\zg))=\frac{x_{\tau_1}}{x_{\tau_2}} \, x(\zg) \quad \textup{\it and}\quad
 x(g(\zg))=\frac{x_{\tau_1}}{x_{\tau_2}} \, x(\zg) 
\end{equation}
\end{lem}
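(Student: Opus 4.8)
The plan is to prove the two assertions of Lemma~\ref{lem f} in tandem: first the bijectivity statements, then the identities~(\ref{eq 2}) (which, given the explicit formulas for $f$ and $g$, will be almost immediate). I would begin by noting that by Lemma~\ref{cor 2}(a), $\mathcal{P}_T(\zg)_{\tau_1}$ is the disjoint union of $\mathcal{P}_T(\zg)_{\tau_1\tau_2}$ and $\mathcal{P}_T(\zg)_{\tau_1,-\tau_2}$, so it suffices to handle the two target sets separately. We already checked before the lemma that $f(\zb)$ and $g(\zb)$ lie in $\mathcal{P}_T(\zg)$; I would now refine that observation by noting that $f(\zb)=(\tau_1,\zb_2,\ldots)$ starts with $\tau_1^\pm$ and, since $\zb$ contained $\tau_2^\pm$ exactly once as its first arc (property (T3) for $\rho$ together with $\zb_1=\tau_2^-$, using that $\tau_2$ crosses $\rho$ by Lemma~\ref{lem 0}(a) so $e(\tau_2,\rho)\ge 1$, and in fact equals $1$ in the simply connected case by Proposition~\ref{prop 1}(a)), $f(\zb)$ contains no $\tau_2^\pm$ — so $f(\zb)\in\mathcal{P}_T(\zg)_{\tau_1,-\tau_2}$. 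Similarly $g(\zb)=(\tau_1,\tau_2,\zb_1,\ldots)$ manifestly starts with $\tau_1^\pm\tau_2^\pm$, and since $\zb$ contains no $\tau_2^\pm$ at all, $\tau_2$ occurs exactly once in $g(\zb)$, consistent with (T3) because $e(\tau_2,\zg)=1$; hence $g(\zb)\in\mathcal{P}_T(\zg)_{\tau_1\tau_2}$.

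Next I would construct the inverse maps explicitly. Given $\za=(\tau_1^{\pm},\za_2,\ldots,\za_{\ell})\in\mathcal{P}_T(\zg)_{\tau_1,-\tau_2}$: the first arc must be $\tau_1$ with the orientation $s(\tau_1)=a$, so $t(\za_1)=d$, and I claim $\za_2$ crosses $\rho$. Indeed, by (T5) $\za$ is homotopic to $\zg$ between $d=t(\za_1)$ and the crossing point $i_2$, and since $\za$ does not use $\tau_2$, the first crossing $i_2$ lies on an arc incident to $d$ by Lemma~\ref{lem 0}(c) with the roles of $\rho$ and $\zs$ read off from Figure~\ref{fig 3} — more directly, the portion of $\zg$ beyond $\tau_2$ enters the quadrilateral and must exit through $\rho$ or $\zs$; being on the $d$-side it crosses $\rho$. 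So $f^{-1}(\za):=(\tau_2^-,\za_2,\ldots,\za_\ell)$ is defined; one checks it is a reduced $T$-path (reducedness at position $2$ because $\za_2$ crosses $\zg$, hence crosses $\tau_2$ at most at an endpoint, hence $\za_2\ne\tau_2$), satisfies (T1)--(T5) for $\rho$ (using Lemma~\ref{lem 0}(a) in reverse for (T2), and simple-connectedness for (T5)), and starts with $\tau_2^-$. For $g$: given $\za=(\tau_1^\pm,\tau_2^\pm,\za_3,\ldots,\za_\ell)\in\mathcal{P}_T(\zg)_{\tau_1\tau_2}$, set $g^{-1}(\za):=(\za_3,\ldots,\za_\ell)$; here $\tau_2$ occurs exactly once in $\za$ by (T3) and $e(\tau_2,\zg)=1$, so $\za_3,\ldots,\za_\ell$ avoid $\tau_2$, and $s(\za_3)=t(\tau_2)=$ the endpoint of $\rho$ adjacent to $\tau_2$; verifying (T1)--(T5) for $\rho$ is routine. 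Then $f\circ f^{-1}$, $f^{-1}\circ f$, $g\circ g^{-1}$, $g^{-1}\circ g$ are all visibly the identity.

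Finally, the identities~(\ref{eq 2}): by~(\ref{**}), $x(f(\zb))=x(\tau_1)\cdot x(\zb)/x(\tau_2^-)$ relative to $x(\zb)$ — more precisely, passing from $\zb=(\tau_2^-,\zb_2,\ldots)$ to $f(\zb)=(\tau_1,\zb_2,\ldots)$ replaces the first (odd-position) factor $x_{\tau_2^-}=x_{\tau_2}$ in $x(\zb)$ by $x_{\tau_1}$, using $x_\tau=x_{\tau^-}$; hence $x(f(\zb))=\dfrac{x_{\tau_1}}{x_{\tau_2}}\,x(\zb)$. For $g$, passing from $\zb$ to $g(\zb)=(\tau_1,\tau_2,\zb_1,\ldots)$ prepends an odd factor $x_{\tau_1}$ and an even factor $x_{\tau_2}^{-1}$, so again $x(g(\zb))=\dfrac{x_{\tau_1}}{x_{\tau_2}}\,x(\zb)$. (The statement as printed writes $\zg$ for the argument; I would keep the paper's notation.)

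The main obstacle is the verification that the proposed inverse of $f$ genuinely lands in $\mathcal{P}_T(\rho)_{\tau_2}$ — specifically the claim that for $\za\in\mathcal{P}_T(\zg)_{\tau_1,-\tau_2}$ the second arc $\za_2$ crosses $\rho$, which is what makes $(\tau_2^-,\za_2,\ldots)$ satisfy (T2). This rests on the geometric picture of the quadrilateral with diagonals $\zg,\tau_2$ and on Lemma~\ref{lem 0}(c) (an arc crossing $\zg$ but not $\rho$ must be incident to $c$, hence cannot be $\za_2$ once we know $t(\za_1)=d$); all other properties follow formally from (T1)--(T5), Lemma~\ref{lem 0}, Proposition~\ref{prop 1}, and simple-connectedness. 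I expect this geometric step to be where the care is needed; everything else is bookkeeping.
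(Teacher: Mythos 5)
Your overall strategy --- exhibit explicit two-sided inverses for $f$ and $g$ and read off (\ref{eq 2}) from the definitions --- is essentially the paper's argument in different clothing: the paper derives injectivity from (\ref{eq 2}) together with Proposition \ref{prop 1}(c), and then proves surjectivity by verifying that exactly your candidate preimages are genuine $(T,\rho)$-paths. But two of your supporting claims do not hold up. First, a factual error: you assert that $\tau_2$ crosses $\rho$ ``by Lemma \ref{lem 0}(a)'' and hence $e(\tau_2,\rho)\ge 1$. Lemma \ref{lem 0}(a) says nothing of the sort, and in fact $\tau_2$ and $\rho$ are a diagonal and a side of the same quadrilateral sharing the endpoint $c$, so $e(\tau_2,\rho)=0$. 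Your conclusion that $f(\zb)$ contains no $\tau_2^\pm$ survives, but for the opposite reason: (T3) with $e(\tau_2,\rho)=0$ already forbids $\tau_2^\pm$ in even position of $\zb$, and Proposition \ref{prop 1}(b) forbids a second (odd) occurrence besides $\zb_1$.

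Second, and more seriously, your verification that $(\tau_2^-,\za_2,\ldots,\za_{\ell})$ satisfies (T2) treats only the arc $\za_2$. Condition (T2) requires \emph{every} even arc $\za_2,\za_4,\ldots$ to cross $\rho$, and your stated justification for the rest --- ``Lemma \ref{lem 0}(a) in reverse'' --- is precisely the converse of Lemma \ref{lem 0}(a), which is false in general: an arc may cross $\zg$ without crossing $\rho$ provided it is incident to $c$, which is exactly the content of Lemma \ref{lem 0}(c). Ruling this out for the later even arcs is the real work of the surjectivity argument; the paper does it by observing that, since some arc of $T\setminus\{\tau_2\}$ incident to $d$ crosses $\zg$ (namely $\za_2$) and the arcs of the triangulation are pairwise non-crossing, no arc of $T\setminus\{\tau_2\}$ incident to $c$ can cross $\zg$, so every arc of $T\setminus\{\tau_2\}$ crossing $\zg$ also crosses $\rho$. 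Likewise, your claim that the verification for $(\za_3,\ldots,\za_{\ell})$ is ``routine'' skips the most delicate step of the whole proof: one must show that the first even arc $\za_4$ is not incident to $c$ (using that $\za_3$ starts at $c$ and that $\za$ is reduced) and then propagate to the remaining even arcs via (T4). Neither of these points follows formally from (T1)--(T5), Lemma \ref{lem 0} and simple-connectedness; without them the bijectivity claims are not established.
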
  
\begin{pf} 
The formulas (\ref{eq 2}) follow directly from the definitions of $f$
and $g$. These formulas together with Proposition \ref{prop 1} imply the
injectivity of $f$ and $g$. 
To show the surjectivity of $f$, suppose that 
$\mathcal{P}_T(\zg)_{\tau_1,-\tau_2}$ is not empty and
let $\za\in \mathcal{P}_T(\zg)_{\tau_1,-\tau_2}$ be an arbitrary
element. Say 
\[\za=(\tau_1,\za_2,\za_3,\za_4,\ldots,\za_{\ell(\alpha)}).\]
We need to show that the path
\[\zb=(\tau^-_2,\za_2,\za_3,\za_4,\ldots,\za_{\ell(\alpha)})\]
is an element of $\mathcal{P}_T(\rho)_{\tau_2}$. It is a
reduced $T$-path  because the path $\za$
does not contain the arc $\tau_2^\pm$. Condition (T1)  holds since $\za\in
\mathcal{P}_T(\zg)$
 and condition (T5) holds since $S$ is simply connected. 
Moreover, since 
$\mathcal{P}_T(\zg)_{\tau_1,-\tau_2}$ is not empty,  there exists an arc
 in $T\setminus\{\tau_2\}$ which is incident to $d$ and
  crosses $\zg$. Since $T$ is a triangulation, it follows that any
  diagonal in  $T\setminus\{\tau_2\}$ that crosses $\zg$ also crosses
  $\rho$. Thus $\zb$ satisfies conditions (T2) and (T4), because  $\za\in
\mathcal{P}_T(\zg)$. Consequently, $\zb$ also satisfies condition
(T3), by Proposition \ref{prop 1} (b). 
This shows that $\zb\in \mathcal{P}_T(\rho)$, and since $\zb$ starts
with the arc $\tau_2^-$, we have  $\zb\in \mathcal{P}_T(\rho)_{\tau_2}$.
 Hence  $f$ is surjective.

It remains to show that $g$ is surjective. Let $\alpha \in
\mathcal{P}_T(\zg)_{\tau_1\tau_2}$ be arbitrary. Say 
\[\za=(\tau_1,\tau_2,\za_3,\za_4,\ldots,\za_{\ell(\alpha)}).\]
We have to show that
\[\zb=(\za_3,\za_4,\ldots,\za_{\ell(\alpha)})
\in \mathcal{P}_T(\rho).\]
It is a reduced $T$-path, since $\alpha \in
\mathcal{P}_T(\zg)$, condition (T1) follows since
$s(\za_3)=t(\tau_2)=c$,
 and condition (T5) holds because $S$ is simply
connected.  Let us show (T2).  
We need to show that any even arc of $\beta$ crosses $\rho$. Since
$\alpha\in\mathcal{P}_T(\zg)$, we know that every even arc of $\zb$
crosses $\zg$.  
Thus by Lemma \ref{lem 0}(c), if there is an even arc of $\zb$
that does not cross $\rho$, then  
this arc has to be incident to $c$.
Since $\zb$ starts at $c$, its first even arc $\za_{4}$ cannot  be 
incident to $c$, because $\za$ is reduced, and thus $\za_4$ crosses both $\zg$ and $\rho$. Then, since 
$\za $ satisfies (T4), every even arc of $\zb$ crosses $\zg$ and $\rho$. 
This shows (T2), and (T4) follows from the fact that $\za\in
\mathcal{P}_T(\zg)$. Condition (T3) holds  by Proposition \ref{prop 1}.
Hence $\zb\in \mathcal{P}_T(\rho)$ and $g$ is surjective.
\qed
\end{pf}

\begin{lem}\label{lem sum} We have
\[
\begin{array}{lcrclc}
(a)&\hspace*{45pt}& \displaystyle\sum_{\zb\in \mathcal{P}_T(\rho)  }
  x(\zg)\ \frac{x_{\tau_1}}{x_{\tau_2}} &=&   
\displaystyle \sum_{\za\in
 \mathcal{P}_T(\zg)_{\tau_1}  } x(\za)&\hspace*{50pt} \\ \\ 
(b)&& \displaystyle\sum_{\zb\in \mathcal{P}_T(\zs)  } x(\zg) \
  \frac{x_{\tau_3}}{x_{\tau_2}} &=& 
\displaystyle\sum_{\za\in 
 \mathcal{P}_T(\zg)_{\tau_3}  } x(\za).
\end{array}  
\]
\end{lem}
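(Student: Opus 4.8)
The plan is to derive (a) by combining Lemma \ref{cor 2}(a) with the two bijections of Lemma \ref{lem f} together with one extra combinatorial observation about $\rho$, and then to obtain (b) by the obvious symmetry interchanging $\tau_1\leftrightarrow\tau_3$, $\rho\leftrightarrow\zs$ and $d\leftrightarrow c$, with $f,g$ replaced by their analogues built from the pair $(\tau_3,\tau_2)$ in place of $(\tau_1,\tau_2)$.

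The key point to establish first is the decomposition
\[
\mathcal{P}_T(\rho)=\mathcal{P}_T(\rho)_{\tau_2}\sqcup\mathcal{P}_T(\rho)_{-\tau_2},
\]
i.e.\ that every $(T,\rho)$-path which uses the arc $\tau_2^{\pm}$ in fact begins with it. Since $\rho$ and $\tau_2$ share the endpoint $c$ (see Figure \ref{fig 3}), we have $e(\tau_2,\rho)=0$, so by (T2) and (T3) the arc $\tau_2$ can occur only in an odd position of a $(T,\rho)$-path $\zb=(\zb_1,\ldots,\zb_{\ell(\zb)})$. If it occurred in an odd position $j\ge 3$, then, recalling that $\zb$ starts at $s(\rho)=t(\tau_2)=c$, that the endpoints of $\tau_2$ are $c$ and $d$, and that $t(\rho)\ne c$ because $S$ is simply connected, such an occurrence of $\tau_2^{\pm}$ would force $\zb$ to return to the marked point $c$. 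But arguing exactly as in the proof of Proposition \ref{prop 1}(b) one sees that no $(T,\rho)$-path can revisit a marked point: such a path would contain a null-homotopic loop running through a crossing point with $T$, which yields a contradiction. Hence the displayed decomposition holds, and the analogous decomposition of $\mathcal{P}_T(\zs)$ holds by the same reasoning.

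Granting this, (a) is a short computation. By Lemma \ref{cor 2}(a),
\[
\sum_{\za\in\mathcal{P}_T(\zg)_{\tau_1}}x(\za)
=\sum_{\za\in\mathcal{P}_T(\zg)_{\tau_1\tau_2}}x(\za)
+\sum_{\za\in\mathcal{P}_T(\zg)_{\tau_1,-\tau_2}}x(\za).
\]
By Lemma \ref{lem f}, the map $g$ identifies $\mathcal{P}_T(\rho)_{-\tau_2}$ with $\mathcal{P}_T(\zg)_{\tau_1\tau_2}$ and $f$ identifies $\mathcal{P}_T(\rho)_{\tau_2}$ with $\mathcal{P}_T(\zg)_{\tau_1,-\tau_2}$, and in both cases $x$ gets multiplied by $x_{\tau_1}/x_{\tau_2}$; hence the right-hand side equals
\[
\frac{x_{\tau_1}}{x_{\tau_2}}\Bigl(\sum_{\zb\in\mathcal{P}_T(\rho)_{-\tau_2}}x(\zb)+\sum_{\zb\in\mathcal{P}_T(\rho)_{\tau_2}}x(\zb)\Bigr)
=\frac{x_{\tau_1}}{x_{\tau_2}}\sum_{\zb\in\mathcal{P}_T(\rho)}x(\zb),
\]
the last equality being the decomposition above. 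This proves (a), and (b) is obtained verbatim after replacing $(\tau_1,\rho)$ by $(\tau_3,\zs)$ and Lemma \ref{cor 2}(a) by Lemma \ref{cor 2}(b).

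I expect the combinatorial fact isolated in the second paragraph, together with its $\zs$-analogue, to be the only real obstacle: the delicate step is the use of simple-connectedness to rule out a $(T,\rho)$-path that returns to the vertex $c$. This is, however, of exactly the same nature as Proposition \ref{prop 1}(b) and Lemma \ref{cor 2}, so I would not expect to need any genuinely new ingredient; everything else is bookkeeping with the bijections already constructed.
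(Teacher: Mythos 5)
Your overall route is exactly the paper's: Lemma \ref{cor 2}(a) plus the two bijections of Lemma \ref{lem f} plus the decomposition $\mathcal{P}_T(\rho)=\mathcal{P}_T(\rho)_{\tau_2}\sqcup\mathcal{P}_T(\rho)_{-\tau_2}$, with (b) by symmetry; the bookkeeping in your third paragraph is correct and is all the paper itself records (it cites the decomposition as a bare ``fact''). You are also right that this decomposition is the one genuinely non-trivial ingredient.

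However, your justification of it contains a false step. The claim that \emph{no $(T,\rho)$-path can revisit a marked point} is wrong, and the paper's own $A_5$ example refutes it: the path $(\tau^-_7,\tau^-_1,\tau^-_2,\tau_3,\tau_4,\tau_5,\tau_{11})\in\mathcal{P}_T(\zg)$ there traverses the vertices $a,f,c,d,f,e,d,b$, returning both to $f$ and to $d$. The Proposition \ref{prop 1}(b)-style argument does not transfer to vertices: contracting a null-homotopic loop based at a marked point $v$ yields a path that \emph{passes through} $v$, and such a path may legitimately avoid every arc of $T$ incident to $v$ by slipping through the endpoint, so no contradiction with the minimality of $e(\cdot,\cdot)$ arises (in the example above, contracting the loop $f\to c\to d\to f$ produces a path through $f$ that never crosses $\tau_1$, even though $e(\zg,\tau_1)=1$). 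So your argument, as written, would also ``prove'' the false general statement, and hence does not establish the special case you need. The decomposition itself is true, but for a different reason: $e(\tau_2,\rho)=0$ forces $\tau_2^{\pm}$ into an odd position (you have this part right), and then one must use that $\tau_2$, together with $\tau_1$ and $\tau_3$, bounds the triangle at $a$, which no arc of $T$ crossing $\rho$ can enter; combined with (T4) and (T5), an occurrence of $\tau_2^{\pm}$ must therefore precede the first designated crossing point of the path with $\rho$, i.e.\ it can only be the first arc. Some argument of this kind (localizing $\tau_2$ in the first region cut out by the arcs crossing $\rho$, rather than forbidding returns to $c$) is needed to close the gap.
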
  

\begin{pf}
The first statement follows from  Lemma \ref{cor 2}(a), Lemma
\ref{lem f} and the fact that 
 $\mathcal{P}_T(\rho) = \mathcal{P}_T(\rho)_{\tau_2}\sqcup
\mathcal{P}_T(\rho)_{-\tau_2}$. 
The second statement follows by symmetry.
\qed \\
\end{pf}

\emph{Proof of Theorem \ref{thm 1} for simply connected surfaces $S$.}
The total number
of crossings between $\zg$ and $T$ is $e(T,\zg)=\sum_{\tau_i\in
  T}\, e(\tau_i,\zg)$. 

We prove the theorem by induction on $e(T,\zg)$.
If $e(T,\zg) =0$, then $\zg\in T$. In this case, no element of $T$ crosses
$\zg$ and, by condition (T2), the set
$\mathcal{P}_T(\zg)$ contains exactly one element: $\mathcal{P}_T(\zg)=\{ \zg\}$. Thus 
\[\sum_{\za\in \mathcal{P}_T(\zg) }x(\za) = x(\zg) =
x_{\zg}.\]

Suppose now that $e(T,\zg)\ge 1$. 
As before, consider the unique quadrilateral in which $\zg$ and
$\tau_2$ are the diagonals (see Figure \ref{fig 3}).
Thus, in the cluster algebra $\mathcal{A}(S,M)$, we have the  exchange relation 
\begin{equation}\label{exch}
  x_\zg\,x_{\tau_2} = x_{\tau_1}\,x_\rho +x_{\tau_3}\,x_{\zs}. \end{equation} 
Moreover, any arc in $T$ that crosses $\rho$ (respectively $\zs$) also
crosses $\zg$, by Lemma \ref{lem 0}(a), and, moreover, $\tau_2$ crosses
$\zg$ but crosses neither $\rho$ nor $\zs$. Thus $e(T,\rho)<e(T,\zg)$ and $
e(T,\zs)<e(T,\zg)$, and by 
induction hypothesis
\[x_\rho=\sum_{\zb\in \mathcal{P}_T(\rho)} x(\zb)
\qquad \textup{and} \qquad
x_{\zs}=\sum_{\zb\in \mathcal{P}_T(\zs)} x(\zb).
\]
Therefore, we can write the exchange relation (\ref{exch}) as 
\begin{eqnarray}\nonumber
 x_\zg &=&\sum_{\zb\in  \mathcal{P}_T(\rho)} x(\zb) \ \frac{x_{\tau_1}}{x_{\tau_2}}
\quad+\sum_{\zb\in \mathcal{P}_T(\zs)} x(\zb)\ \frac{x_{\tau_3}}{x_{\tau_2}}.
\end{eqnarray}  
The theorem now follows from Lemma \ref{lem sum} and Lemma \ref{lem 5}.
\qed

\end{subsection}

\begin{subsection}{The non-simply connected case}\label{sect non
    simply connected}
In this subsection, we prove Theorem \ref{thm 1} for any unpunctured
surface $(S,M)$.
The key idea of the proof is to work in a universal cover of $S$, so that
we can use Theorem \ref{thm 1} for simply connected surfaces.
First, we prove a Lemma which will allow us to use induction later.

\begin{lem}\label{lem exchange}
Let $T=\{\tau_1,\ldots,\tau_N\}$  be a triangulation of $S$, and let
  ${\zb}$ be an arc in $S$ which is not in $T$.
 Let $e({\zb},T)$ be the number of crossings
  between ${\zb}$ and $T$. 
Then there exist five arcs
  $\rho_1,\,\rho_2,\,\zs_1,\,\zs_2$ and ${\zb'}$ in $S$ such that 
\begin{itemize}
\item[(a)] each of $\rho_1,\,\rho_2,\,\zs_1, \,\zs_2$  and ${\zb'}$ crosses $T$
  less than $e({\zb},T)$ times, 
\item[(b)] $\rho_1,\,\rho_2,\,\zs_1,\,\zs_2$ are the sides of a simply
  connected 
  quadrilateral in which ${\zb}$ and ${\zb'}$ are the diagonals,
\item[(c)] in the cluster algebra   $\mathcal{A}(S,M)$, we have the
  exchange relation 
\begin{equation}\label{eq lem exchange}
 x_{{\zb}}x_{{\zb'}} =  x_{\rho_1} x_{\rho_2}+ x_{\zs_1}x_{ \zs_2}.
\end{equation}  
\end{itemize}   
\end{lem}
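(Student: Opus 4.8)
The plan is to find the arc $\zb'$ by a local move near one end of $\zb$, exactly mimicking the construction of $\tau_1,\tau_2,\tau_3,\rho,\zs$ in Section \ref{sect simply connected}, but now without assuming $S$ simply connected. Orient $\zb$ and let $a$ be its starting point. Since $\zb\notin T$ and $\zb$ crosses $T$ (as $e(\zb,T)\ge 1$, for otherwise $\zb\in T$), among all arcs of $T$ crossing $\zb$ there is one, call it $\eta$, whose crossing point with $\zb$ is closest to $a$ along $\zb$. There is a unique triangle $\Delta$ of $T$ having $\eta$ as a side and $a$ as its opposite vertex; let the other two sides of $\Delta$ be $\eta_1,\eta_3$, with $\eta_1$ the side such that $s(\eta)=t(\eta_1)$ in the notation of Figure \ref{fig 3}. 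Now let $Q$ be the quadrilateral (in $S$, possibly only immersed, but see below) formed by flipping $\eta$ across $\zb$: its diagonals are $\zb$ and $\eta$, and its four sides are $\eta_1,\eta_3$ together with two further arcs $\rho,\zs$, where $\rho$ is opposite $\eta_1$. Set $\zb'=\eta$, $\{\rho_1,\rho_2\}=\{\eta_1,\rho\}$ and $\{\zs_1,\zs_2\}=\{\eta_3,\zs\}$.

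The exchange relation (c) is then immediate: $\zb$ and $\zb'=\eta$ are related by a flip inside the quadrilateral $Q$, and the exchange relation in $\mathcal{A}(S,M)$ associated to this flip is precisely $x_\zb x_{\zb'}=x_{\rho_1}x_{\rho_2}+x_{\zs_1}x_{\zs_2}$ — this is the surface realization of cluster mutation from \cite{FST}, using that compatible arcs of $(S,M)$ correspond to clusters and a flip corresponds to a single mutation, so the two sides of the exchange relation are read off from the two ways of completing $Q\setminus\{\zb,\zb'\}$ to a triangle pair. For (b), the quadrilateral $Q$ is simply connected because it is cut out by the arcs of a neighborhood of the triangle $\Delta$: one lifts to the universal cover $\widetilde S$, where the lift of $\Delta$ and the lift of the flip triangle glue along $\eta$ to give an honest embedded quadrilateral, and this downstairs is an immersed disk whose interior meets no marked point, i.e. a simply connected quadrilateral in the sense used in the simply connected proof.

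For (a), the count $e(\cdot,T)<e(\zb,T)$: by the choice of $\eta$ as the arc whose crossing with $\zb$ is closest to $a$, and by the analogue of Lemma \ref{lem 0} (which holds for the same reason — any $\tau_i\in T$ meeting $\rho$ or $\zs$ must meet $\zb$ since it cannot meet $\eta$), each of $\rho$ and $\zs$ crosses $T$ no more often than $\zb$ does, and strictly less because $\eta$ itself crosses $\zb$ but crosses neither $\rho$ nor $\zs$; similarly $\eta_1,\eta_3$ are sides of a triangle of $T$ incident to $a$, hence cross $T$ at most as often as $\zb$ minus the crossing at $\eta$, and $\zb'=\eta\in T$ crosses $T$ zero times. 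I expect the main obstacle to be the careful bookkeeping for (a) in the non-simply-connected setting: without Proposition \ref{prop 1} an arc of $T$ may cross $\zb$ more than once, so "$\eta$ is the closest crossing to $a$" must be used to show that no side of $Q$ reuses all the crossings of $\zb$, and one should run the argument in the universal cover $\widetilde S$ (where Proposition \ref{prop 1} applies to a chosen lift $\widetilde\zb$) and then project, checking that the projected arcs are genuine arcs of $(S,M)$ and that crossing numbers only drop under projection. This lift-and-project step, rather than the algebra of the exchange relation, is where the real work lies.
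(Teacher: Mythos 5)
There is a genuine gap, and it sits exactly where the non-simply-connected hypothesis bites. Your construction takes $\zb'=\eta$, where $\eta\in T$ is the arc whose crossing point with $\zb$ is closest to $a$, and then declares $\zb$ and $\eta$ to be the diagonals of a quadrilateral $Q$, from which the exchange relation (c) is ``immediate.'' But an exchange relation of the form $x_\zb x_{\zb'}=x_{\rho_1}x_{\rho_2}+x_{\zs_1}x_{\zs_2}$ requires $e(\zb,\zb')=1$: two arcs are the diagonals of a quadrilateral (equivalently, are related by a flip) only if they cross exactly once. On a non-simply-connected surface the arc $\eta$ may perfectly well cross $\zb$ several times --- this already happens in the annulus example of Section \ref{sect 1}, where arcs of $T$ cross $\zg$ twice --- and nothing about being ``the arc whose crossing is closest to $a$'' prevents $\eta$ from returning to cross $\zb$ again. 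When $e(\zb,\eta)\ge 2$ there is no quadrilateral with $\zb$ and $\eta$ as diagonals and no exchange relation between $x_\zb$ and $x_\eta$, so both (b) and (c) fail for your choice of $\zb'$. This is precisely the difficulty the paper's proof is designed to circumvent: it does \emph{not} take $\zb'\in T$, but instead builds a new arc $\zb'$ by splicing an initial segment of $\zb$, a piece of a chosen $\tau\in T$, and a terminal segment of $\zb^{-}$, pivoting at the crossing point $i_\ell$ of $\tau$ with $\zb$ that is \emph{midmost} among all $k$ crossings of $\zb$ with $T$. That choice forces $e(\zb,\zb')=1$ by construction and, via the inequality $\vert(k+1)/2-i_\ell\vert\le\vert(k+1)/2-i_j\vert$, makes all five crossing counts come out strictly below $k$ in each of the four sign cases for $i_{\ell\pm1}$ versus $i_\ell$.

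Your fallback --- ``run the argument in the universal cover and project'' --- does not repair this. The projection of an arc of $\widetilde S$ need not be an arc of $S$ (it can self-intersect), crossing numbers do not simply ``drop under projection,'' and, more to the point, the paper's own mechanism for transporting identities from $\mathcal{A}(\Sbar,\Mbar)$ down to $\mathcal{A}(S,M)$ (the homomorphism $\pi_{\mathcal A}$ and Lemma \ref{lem pi}) is itself proved by induction \emph{using} Lemma \ref{lem exchange} in the base surface. So the lemma must be established intrinsically in $S$, which is what the midmost-crossing construction accomplishes. Your crossing-count sketch in (a) also leans on an ``analogue of Lemma \ref{lem 0},'' whose proof explicitly invokes simple connectivity; in the general surface that lemma is unavailable, and the correct substitutes are the explicit formulas $e(\rho_1,T)=(i_{\ell-1}-1)+(i_\ell-1)$, etc., which only make sense for the spliced arcs.
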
  

\begin{pf} We prove the Lemma by induction on $k=e({\zb},T)$. If
  $k=1$, then let ${\zb'}\in T$ be the unique arc that crosses
  ${\zb}$. Then there exists a unique quadrilateral in $T$ in which
  ${\zb'}$ and ${\zb}$ are the diagonals. Let $\rho_1$ and $ \rho_2$ be two
  opposite sides of this quadrilateral and  let $\zs_1$ and $ \zs_2$
  be the other two  opposite sides. These arcs satisfy (a),(b) and (c).

Suppose that $k\ge 2$. Choose an orientation of ${\zb}$ and denote its
starting point by $a$ and its endpoint by $b$ (note that $a$ and $b$
may be the same point). Label the $k$ crossing points of ${\zb}$ and $T$
by $1,2,\ldots,k$ according to their order on ${\zb}$, such that the
crossing point $1$ is the closest to $a$. Let $\tau$ be an arc of the
triangulation $T$ such that $e(\tau,{\zb})=r\ge 1.$ As before with
${\zb}$, label the $r$ crossing
points of $\tau$ and $\zb$ by $i_1,i_2,\ldots,i_r$ according to
their order on $\tau$ ! (see Figure \ref{figtau}). Thus $r\le k$,
$\{i_1,i_2,\ldots,i_r\}\subset\{1,2,\ldots,k\}$. Note that $j<\ell$
does \emph{not} imply $i_j<i_\ell$.

\begin{figure}
\input{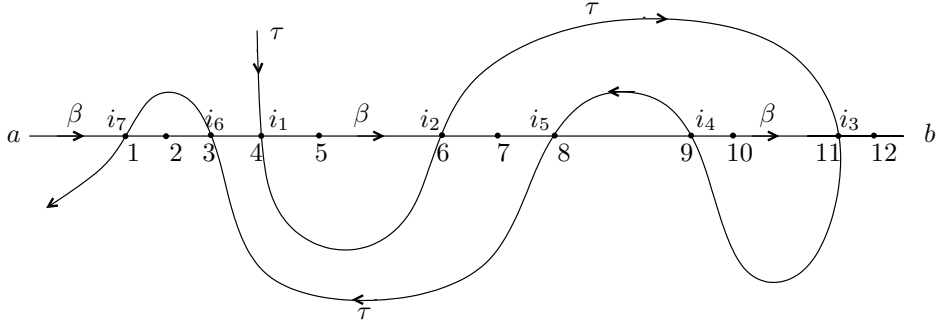}
\caption{Proof of Lemma \ref{lem exchange}, $k=12$ and $i_\ell=i_2$.}\label{figtau}
\end{figure}   

Using $\tau$ and ${\zb}$, we will now construct the five arcs of the
Lemma. Recall that ${\zb}^-$ (respectively  $\tau^-$) denotes the path ${\zb}$
(respectively $\tau$) with the opposite orientation.

Let $\ell$ be such that 

\begin{equation}\label{ell}
  \vert (k+1)/2-i_\ell\vert \ \le\ \vert (k+1)/2
-i_j\vert, \quad\textup{ for all }j=1,2,\ldots,r.
\end{equation}
In other words, among all $r$ crossing points of ${\zb}$ and $\tau$, the
point $i_\ell$ is midmost on ${\zb}$ with respect to the $k$ crossing
points of ${\zb}$ and $T$ (see Figure \ref{figtau} for an example).
We will distinguish four cases:
\begin{enumerate}
\item \emph{$i_{\ell-1}<i_\ell$ and $i_{\ell+1}<i_\ell$.} Consider the
  following   arcs (see Figure \ref{fig arcs}): 

Let ${\zb'}=(a,i_{\ell-1},i_{\ell+1},a\mid {\zb},\tau,{\zb}^-)$  be the arc
  that starts at at $a$ and is homotopic to ${\zb}$ 
  up to the crossing point $i_{\ell-1}$, then, from $i_{\ell-1}$ to
  $i_{\ell+1}$, ${\zb'} $ is homotopic to $\tau$, and from $i_{\ell+1}$ to
  $a$, ${\zb'}$ is homotopic to ${\zb^-}$. Note that ${\zb'}$ and ${\zb}$ cross
  exactly once, namely at the point $i_\ell$.

In a similar way, let
\[
\begin{array}{cc}
 \rho_1=(a,i_\ell,i_{\ell-1},a\mid {\zb},\tau^-,{\zb}^-) 
&\rho_2=(b,i_\ell,i_{\ell+1},a\mid {\zb}^-,\tau,{\zb}^-) \\
 \zs_1=(a,i_{\ell-1},i_{\ell},b\mid {\zb},\tau,{\zb})
&\zs_2=(a,i_{\ell+1},i_{\ell},a\mid {\zb},\tau^-,{\zb}^-)
\end{array}  
\]
In the special case where $\ell=1$, (respectively $l=r$), we define

\[
\begin{array}{cc}
{\zb'}=(c,i_{\ell+1},a\mid \tau, {\zb}^-) & (\textup{respectively } 
{\zb'}=(a,i_{\ell-1},d\mid {\zb}, \tau)\\
\rho_1=(a,i_{\ell},c\mid {\zb},\tau^-) & (\textup{respectively } 
\rho_2=(b,i_{\ell},d\mid {\zb}^-,\tau)\\
\zs_1=(c,i_{\ell},b\mid \tau,{\zb}) & (\textup{respectively } 
\zs_2=(d,i_{\ell},a\mid \tau^-,{\zb}^-),
\end{array}  \]
where $c$ is the starting point of $\tau $ and $d$ is its endpoint.

\begin{figure}[htp]
\resizebox{4.5in}{!}{\input{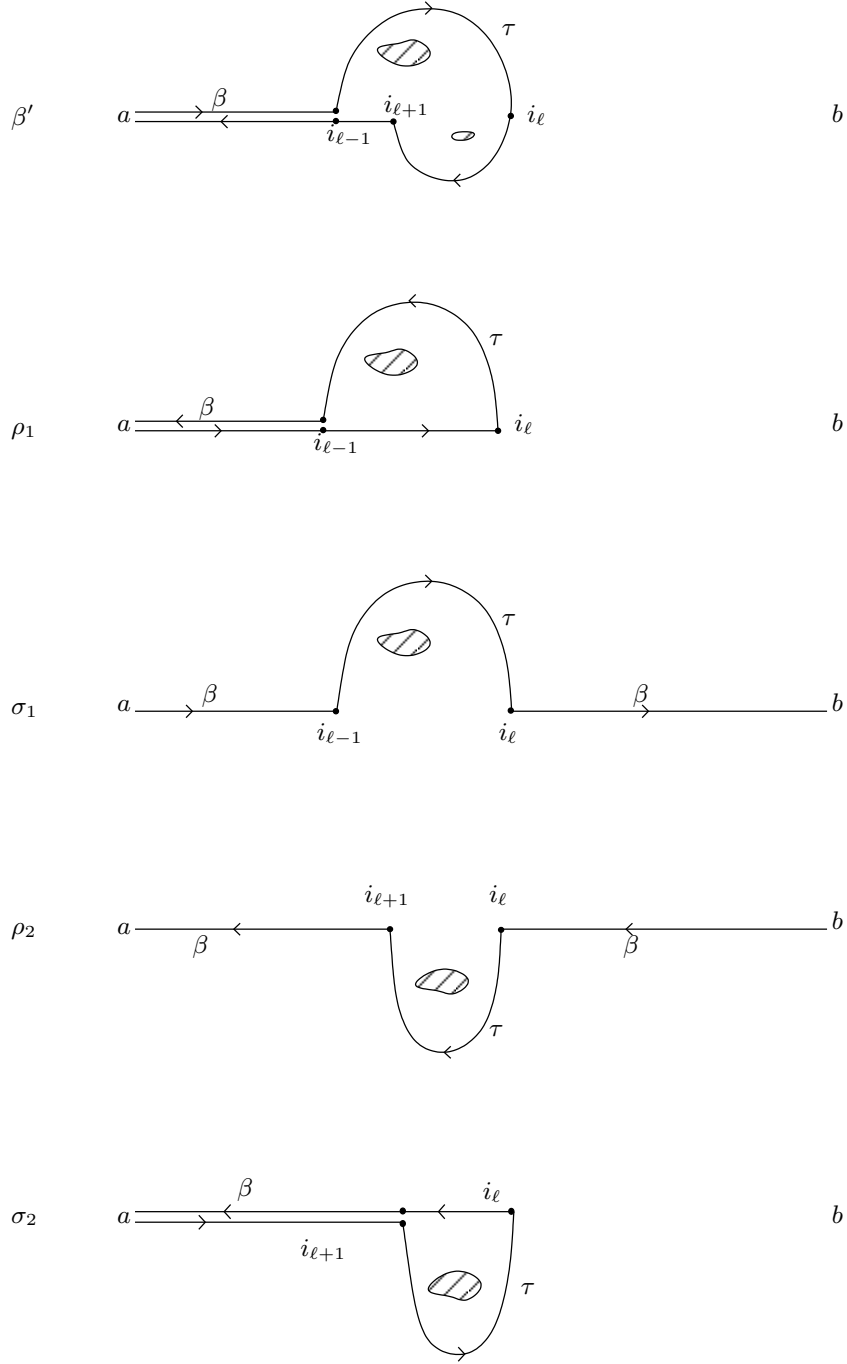}}
\caption{The arcs of Lemma \ref{lem exchange} in case where
  $i_{\ell-1}<i_\ell$ and $i_{\ell+1}<i_\ell$.}\label{fig arcs}
\end{figure}

Note that, since $i_{\ell-1},i_\ell$ and $i_{\ell+1}$ are distinct
crossing points of ${\zb} $ and $T$, the paths $(i_{\ell-1},i_\ell,
i_{\ell-1} \mid \tau,{\zb}^-)$ and  $(i_{\ell+1},i_\ell,
i_{\ell+1} \mid \tau^-,{\zb})$ are not homotopically trivial.

Then $\rho_1,\zs_1,\rho_2,\zs_2$ form a simply connected quadrilateral
such that 
$\rho_1$ and $\rho_2$ are opposite sides, $\zs_1$ and $\zs_2$ are
opposite sides, and ${\zb}$ and ${\zb'}$ are the diagonals. This shows (b).
Moreover, since ${\zb}$ and ${\zb'}$ cross exactly once, we have the
exchange relation in (c). It remains to show (a). Using the hypothesis 
$i_{\ell-1}<i_\ell$ and $i_{\ell+1}<i_\ell$, and the inequality
(\ref{ell}),  we get $i_{\ell-1}\le(k+1)/2, i_{\ell+1}\le(k+1)/2,
i_\ell+i_{\ell-1}\le k+1$ and $i_\ell+i_{\ell+1}\le k+1$. Thus

\[\begin{array}{rcl}
e({\zb'},T)&=&(i_{\ell-1}-1)+(i_{\ell+1}-1) < k \\
e(\rho_1,T)&=&(i_{\ell-1}-1)+(i_{\ell}-1) < k \\
e(\rho_2,T)&=&(i_{\ell+1}-1)+(k-i_{\ell}) < k+(i_{\ell+1}-i_\ell-1)<k \\
e(\zs_1,T)&=&(i_{\ell-1}-1)+(k-i_{\ell}) < k+(i_{\ell-1}-i_\ell-1)<k \\
e(\zs_2,T)&=&(i_{\ell+1}-1)+(i_{\ell}-1) < k. 
\end{array}  \]
 In the case where $\ell=1$, we have 
\[
\begin{array}{rcl}
e({\zb'},T)&=& i_{\ell+1}-1 < k\\
e(\rho_1,T)&=& i_{\ell}-1 < k\\
e(\zs_1,T)&=& k-i_{\ell} < k,
\end{array}  \]
and in the case where $\ell=r$, we have
\[
\begin{array}{rcl}
e({\zb'},T)&=& i_{\ell-1}-1 < k\\
e(\rho_2,T)&=&k-i_{\ell} < k\\
e(\zs_2,T)&=&  i_{\ell}-1 < k .
\end{array}  \]

This shows (a).\\


\item  \emph{$i_{\ell-1}<i_\ell$ and $i_{\ell+1}>i_\ell$.}  Consider the
  following   arcs, see Figure \ref{fig arcs2}: 

\[
\begin{array}{cc}
{\zb'}=(a,i_{\ell-1},i_{\ell+1},b\mid {\zb},\tau,{\zb})\\
 \rho_1=(a,i_{\ell-1},i_{\ell},a\mid {\zb},\tau,{\zb}^-) 
&\rho_2=(b,i_{\ell+1},i_{\ell},b\mid {\zb}^-,\tau^-,{\zb}) \\
 \zs_1=(a,i_{\ell},i_{\ell+1},b\mid {\zb},\tau,{\zb})
&\zs_2=(b,i_{\ell},i_{\ell-1},a\mid {\zb}^-,\tau^-,{\zb}^-).
\end{array}  
\]
In the special case where $\ell=1$, (respectively $l=r$), we define

\[
\begin{array}{cc}
{\zb'}=(c,i_{\ell+1},b\mid \tau,{\zb}) & (\textup{respectively } 
{\zb'}=(a,i_{\ell-1},d\mid {\zb},\tau)\\
\rho_1=(c,i_{\ell},a\mid \tau,{\zb}) & (\textup{respectively } 
\rho_2=(d,i_{\ell},b\mid \tau^-,{\zb})\\
\zs_2=(b,i_{\ell},c\mid {\zb},\tau) & (\textup{respectively } 
\zs_1=(a,i_{\ell},d\mid {\zb}^-,\tau^-),
\end{array}  \]
where $c$ is the starting point of $\tau $ and $d$ is its endpoint.

\begin{figure}[htp]
\resizebox{4.5in}{!}{\input{figarcs2.pstex_t}}
\caption{The arcs of Lemma \ref{lem exchange} in case where
  $i_{\ell-1}<i_\ell$   and
  $i_{\ell+1}>i_\ell$.}\label{fig arcs2}
\end{figure}

Again $\rho_1,\zs_1,\rho_2,\zs_2$ form a simply connected quadrilateral
such that 
$\rho_1$ and $\rho_2$ are opposite sides, $\zs_1$ and $\zs_2$ are
opposite sides, and ${\zb}$ and ${\zb'}$ are the diagonals. This shows (b).
Moreover, since ${\zb}$ and ${\zb'}$ cross exactly once, we have the
exchange relation in (c). It remains to show (a). Using  the hypothesis 
$i_{\ell-1}<i_\ell$ and $i_{\ell+1}>i_\ell$, and the  inequality
(\ref{ell}),
  we get $i_{\ell-1}\le(k+1)/2, k-i_{\ell+1}\le(k+1)/2,
i_\ell+i_{\ell-1}\le k+1$ and $i_\ell+i_{\ell+1}\ge k+1$. Thus

\[\begin{array}{rcl}
e({\zb'},T)&=&(i_{\ell-1}-1)+(k-i_{\ell+1}) < k \\
e(\rho_1,T)&=&(i_{\ell-1}-1)+(i_{\ell}-1) < k \\
e(\rho_2,T)&=&(k-i_{\ell+1})+(k-i_{\ell}) \le
2k-(i_\ell+i_{\ell+1})\le 2k-(k+1)<k\\
e(\zs_1,T)&=&(i_{\ell}-1)+(k-i_{\ell+1}) <k \\
e(\zs_2,T)&=&(k-i_{\ell})+(i_{\ell-1}-1) < k. 
\end{array}  \]
 In the case where $\ell=1$, we have 
\[
\begin{array}{rcl}
e({\zb'},T)&=& k-i_{\ell+1} < k\\
e(\rho_1,T)&=& i_{\ell}-1 < k\\
e(\zs_2,T)&=& k-i_{\ell} < k,
\end{array}  \]
and in the case where $\ell=r$, we have
\[
\begin{array}{rcl}
e({\zb'},T)&=& i_{\ell-1}-1 < k\\
e(\rho_2,T)&=& k-i_{\ell} < k\\
e(\zs_1,T)&=& i_{\ell}-1 < k.
\end{array}  \]

This shows (a).

\item \emph{$i_{\ell-1}>i_\ell$ and $i_{\ell+1}<i_\ell$.} This case
  follows from the case (2) by symmetry.

\item \emph{$i_{\ell-1}>i_\ell$ and $i_{\ell+1}>i_\ell$.} This case
  follows from the case (1) by symmetry.

\end{enumerate}   
\qed
\end{pf}

 Let $(S,M)$ be an unpunctured surface which is not simply
  connected. Let $T=\{\tau_1,\ldots,\tau_N\}$ be a triangulation of
  $S$, and let $\zg$ be an arc. Choose an orientation of $\zg$ and let
  $a$ be its starting point and $b$ its endpoint.
Denote by $\pi:\tilde S\to S$ a universal cover and define $\tilde
  T=\tau^{-1} (T)$. 

Fix a point $\tilde a \in \pi^{-1}(a)$ and let $\tilde \zg$ be the
unique lift of $\zg$ starting at $\tilde \za$. Although $\tilde T$ is
an infinite set, the set $ \mathcal{P}_{\tilde T}(\tilde \zg)$ is
finite, and we define the finite set
\[ \Tbar = \bigcup_{\tilde \za\in  \mathcal{P}_{\tilde T}(\tilde
  \zg) } 
\{\tilde\za_1,\tilde\za_2,\ldots,\tilde\za_{\ell(\tilde\za)}\}.
\]
Thus $\Tbar$ is the set of all arcs in $\tilde S$ that occur in some
path in $\mathcal{P}_{\tilde T}(\tilde \zg) $. 
Then $\Tbar$ is a triangulation of a simply connected surface
$(\Sbar,\Mbar)$: The boundary of $\Sbar$ is the union  of all arcs
that occur in some $\za \in \mathcal{P}_{\tilde T}(\tilde \zg) $ and
do not cross $\zg$, and the set of marked points
$\Mbar=\pi^{-1}(M)\cap \Sbar$. Consider the cluster algebra
$\mathcal{A}(\Sbar,\Mbar)$. By Theorem \ref{thm 1} for simply connected
surfaces, we have

\begin{equation}\label{eq pf}
x_{\tilde\zg}=\sum_{\overline\za\in \mathcal{P}_{\Tbar}(\tilde
  \zg)} x(\overline{\za}).
\end{equation}  
We want to apply $\pi$ to this equation in order to finish the
proof. Before we can do so, we need to study the effect of $\pi$ on
$(T,\zg)$-paths and on cluster variables.

\begin{lem}\label{lem pibar}
The covering map $\pi $ induces a bijection
\[\pibar:\mathcal{P}_{\Tbar}(\tilde \zg)\to \mathcal{P}_T(\zg)\]
 which sends a path
$\zabar=( \zabar_1,\zabar_2,\ldots,\zabar_{\ell(\zabar)})$ to the path
 $\pibar(\zabar)=
 ( \pi\circ\zabar_1,\pi\circ\zabar_2,\ldots,\pi\circ\zabar_{\ell(\zabar)}) $.
\end{lem}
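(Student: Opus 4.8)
The plan is to check that $\pibar$ is well-defined, then construct an inverse, using the fact that $\Sbar$ is simply connected so that lifts of paths are uniquely determined by their starting point. First I would verify that $\pibar$ is well-defined, i.e. that if $\zabar\in\mathcal{P}_{\Tbar}(\tilde\zg)$ then $\pibar(\zabar)\in\mathcal{P}_T(\zg)$. Each $\pi\circ\zabar_i$ is an arc of $T$ (or a boundary arc) since $\Tbar=\pi^{-1}(T)\cap\Sbar$ restricted appropriately; the concatenation condition $s(\pi\circ\zabar_i)=t(\pi\circ\zabar_{i-1})$ follows by applying $\pi$ to the corresponding equality for $\zabar$, and the endpoints map to $a=\pi(\tilde a)=s(\zg)$ and $b=t(\zg)$. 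Reducedness of $\pibar(\zabar)$ needs a small argument: if $\pi\circ\zabar_i=(\pi\circ\zabar_{i-1})^-$ then $\zabar_{i-1}\zabar_i$ is a lift of a closed loop back and forth along an arc, and since $\zabar_{i-1}\ne\zabar_i^-$ (as $\zabar$ is reduced) this forces $\zabar_{i-1}$ and $\zabar_i$ to be two \emph{distinct} lifts of the same arc sharing an endpoint — but two distinct lifts of an arc cannot meet, a contradiction. Conditions (T1) (oddness of $\ell$) is preserved since $\ell(\pibar(\zabar))=\ell(\zabar)$; for (T2) I use that $\zabar_i$ crosses $\tilde\zg$ exactly when $\pi\circ\zabar_i$ crosses $\zg$ (a crossing of $\pi\circ\zabar_i$ and $\zg$ lifts, using the lift $\tilde\zg$, to a crossing of $\zabar_i$ with $\tilde\zg$ after possibly translating the lift, and conversely); (T3) follows from (T3) for $\zabar$ together with the observation that crossings of $\tilde\zg$ with $\tilde T$ project bijectively onto crossings of $\zg$ with $T$, so $e(\pi\circ\tau,\zg)\ge e(\tau,\tilde\zg)$ for the relevant lifts and in fact the crossing points of $\tilde\zg$ with $\tilde T$ are in bijection with those of $\zg$ with $T$ by the labeling. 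For (T4) and (T5): the crossing points of $\tilde\zg$ with $\tilde T$ are labeled $1,\dots,k$ in order along $\tilde\zg$, and $\pi$ carries this labeling to the labeling of the crossing points of $\zg$ with $T$, so the sequence ${\bf i}_{\zabar}$ transports directly to a valid sequence ${\bf i}_{\pibar(\zabar)}$; and homotopy between crossing points along $\tilde\zg$ and along $\zabar$ pushes forward under $\pi$ to homotopy between the images along $\zg$ and $\pibar(\zabar)$.

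Next I would construct the inverse map. Given $\za=(\za_1,\dots,\za_{\ell})\in\mathcal{P}_T(\zg)$, lift it to a path in $\tilde S$ starting at $\tilde a$: there is a unique lift $\tilde\za_1$ of $\za_1$ with $s(\tilde\za_1)=\tilde a$, then a unique lift $\tilde\za_2$ of $\za_2$ with $s(\tilde\za_2)=t(\tilde\za_1)$, and so on; this is the standard unique path-lifting property of the covering $\pi$. I then must check that $\tilde\za=(\tilde\za_1,\dots,\tilde\za_\ell)$ ends at $t(\tilde\zg)$ — this is where simple-connectedness of $\Sbar$ (equivalently, the covering property together with condition (T5), which says $\za$ is homotopic to $\zg$ between $a$ and $b$) is used: since $\za$ and $\zg$ are homotopic rel endpoints, their lifts starting at $\tilde a$ end at the same point, namely $t(\tilde\zg)$. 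The verification that $\tilde\za\in\mathcal{P}_{\Tbar}(\tilde\zg)$ — reducedness, (T1)–(T5), and that each $\tilde\za_i\in\Tbar$ — is the mirror image of the well-definedness check above, using that each crossing, homotopy, and the labeling all lift uniquely once the starting point $\tilde a$ is fixed; in particular each $\tilde\za_i$ lies in $\Tbar$ by the very definition of $\Tbar$ as the set of arcs appearing in paths of $\mathcal{P}_{\tilde T}(\tilde\zg)$, once we observe $\tilde\za\in\mathcal{P}_{\tilde T}(\tilde\zg)$ and that all its arcs lie in the simply connected piece $\Sbar$. Finally, $\za\mapsto\tilde\za$ and $\pibar$ are mutually inverse: $\pi\circ\tilde\za_i=\za_i$ by construction, and conversely applying the lift-starting-at-$\tilde a$ construction to $\pibar(\zabar)$ recovers $\zabar$ by uniqueness of lifts (since $\zabar$ itself starts at $\tilde a$).

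The main obstacle I anticipate is the bookkeeping around the crossing points and the labeling: one has to be careful that the $k$ crossing points of $\tilde\zg$ with $\tilde T$ really do correspond bijectively, and in the same cyclic/linear order, to the $k$ crossing points of $\zg$ with $T$, so that conditions (T3) and (T4) transport cleanly in both directions. This uses that $\zg$ has no self-crossings and lifts to an embedded arc $\tilde\zg$, and that $\pi$ restricted to $\tilde\zg$ is injective onto $\zg$; granting that, each crossing point of $\zg$ with some $\tau\in T$ has a unique preimage on $\tilde\zg$ lying on a unique lift of $\tau$, and the order is preserved because $\pi|_{\tilde\zg}$ is an orientation-preserving homeomorphism onto $\zg$. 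The reducedness argument (distinct lifts of an arc are disjoint) and the homotopy-transport arguments are then routine consequences of covering space theory.
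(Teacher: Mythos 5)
Your proposal follows essentially the same route as the paper: check that $\pibar$ preserves the defining conditions (T1)--(T5), then invert it by unique path lifting from $\tilde a$, using (T5) to see that the lift of $\za$ terminates at $t(\tilde\zg)$. Two points need tightening. First, in the reducedness check, the assertion that ``two distinct lifts of an arc cannot meet'' is not quite right: an arc may have both endpoints at the same marked point, and then distinct lifts can share an endpoint. The correct statement is that two lifts of $\tau$ with the same \emph{terminal} point coincide by uniqueness of path lifting, which still forces $\zabar_i=\zabar_{i-1}^-$ and gives the contradiction (the paper phrases this as: the trivial loop $(\tau,\tau^-)$ lifts to a loop). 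Second, and more substantively, the verification that the lifted path $\tilde\za$ satisfies (T3) is not merely a ``mirror image'' of the downstairs check: downstairs an even arc $\tau$ may legitimately occur up to $e(\tau,\zg)\ge 2$ times, whereas upstairs each arc of $\Tbar$ crosses $\tilde\zg$ at most once, so one must show that two equal even arcs $\za_s=\za_t$ lift to \emph{distinct} arcs $\tilde\za_s\ne\tilde\za_t$. Your appeal to the injectivity of $\pi$ restricted to $\tilde\zg$ gives a bijection of crossing \emph{points}, but does not by itself rule out both preimage points lying on the same lift of $\tau$. The paper closes this by observing that if $\tilde\za_s=\tilde\za_t$, then the loop formed by $\za_s$ and $\zg$ between the two crossing points lifts to a loop in $\Sbar$, hence is null-homotopic in $S$, so the two crossings could be removed by an isotopy of $\za_s$, contradicting the minimality of $e(\za_s,\zg)$. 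With that argument supplied, your proof is complete and agrees with the paper's.
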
  
\begin{pf}
\emph{$\pibar $ is well defined}. Let $\zabar\in
\mathcal{P}_{\Tbar}(\tilde \zg) $. 
First note that the number of crossings of $\tilde\zg$ and $\Tbar$
equals the number of crossings of $\zg$ and $T$. Label the crossing
points in $\Sbar$ by $\overline{1},\overline{2},\ldots,\overline{k}$,
such that $\pi(\overline{i})=i$.

  $\pibar(\zabar)$ is a $T$-path. Indeed  $\pi\circ\zabar_i\in T$, since
$\zabar_i\in \Tbar$, and 
  $s(\pi\circ\zabar_i)=\pi(s(\zabar_i))=\pi(t(\zabar_{i-1}))=t(\pi\circ\zabar_{i-1})$,
  for $i=2,3,\ldots,\ell(\za)$. Moreover,
 $s(\pi\circ\zabar_1)=\pi(s(\zabar_1))=\pi(\tilde a)=a$, and 
$t(\pi\circ\zabar_{\ell(\zabar)})=\pi(t(\zabar_{\ell(\zabar)}))=\pi(\tilde
  b)=b$. 

$\pibar(\zabar)$ is reduced. Indeed,  suppose   $\tau=\pi\circ\zabar_i =
 (\pi\circ\zabar_{i-1})^-.$
Then $(\tau,\tau^-)=\pi\circ(\zabar_i,\zabar_{i-1})$ is a loop in $S$
 which is homotopically trivial. Therefore, its lift
 $(\zabar_i,\zabar_{i-1})$ is a  loop in $\Sbar$, whence
 $\zabar_i=(\zabar_{i-1})^-$, a contradiction.

It remains to  to check the axioms
(T1)--(T5) for $\pibar(\zabar)$. 
\begin{itemize}
\item[(T1)] $\ell(\pibar(\zabar))=\ell(\zabar)$ is odd.
\item[(T2)] If $i$ is even, $\zabar_i$ crosses $\tilde \zg$, hence
  $\pi\circ\zabar_i$ crosses $\zg$.
\item[(T3)] 
The number $e(\zg,\tau_i)$ of crossings between $\zg $ and $\tau_i$ in
$S$ is equal to the number of crossings
between $\tilde \zg$ and $\pi^{-1}(\tau_i)$ in $\Sbar$.

Now,  $\pi\circ\zabar_t=\tau_i$  if and only if
$\zabar_t\in\pi^{-1}(\tau_i)\cap \Sbar$. 
Therefore, since $\zabar $ satisfies
  condition (T3), the number of even  integers $t$ such that
  $\pi\circ\zabar_t=\tau_i$  is
 at most the number of crossings between $\tilde\zg$ and
 $\pi^{-1}(\tau_i)$ in $\Sbar$, hence  the number of even  integers
 $t$ such that   $\pi\circ\zabar_t=\tau_i$  is  at most $e(\zg,\tau_i)$.
\item[(T4)] Since $\zabar$ satisfies condition (T4), there exists a subsequence
  $(\overline{i_2},\overline{i}_4,\ldots,\overline{i}_{\ell(\zabar)-1})$
  of $(\overline{1},\ldots,\overline{k})$ of crossing points in
  $\Sbar$ such that the crossing point $\overline{i}_j$ lies on
  $\zabar_{j}$ and $\overline{i}_j\ne \overline{i}_\ell$ if $j\ne \ell$.
Let $i_j$ be the the image of the crossing point $\overline{i}_j$
  under $\pi$.
 Then
  $(i_2,i_4,\ldots, i_{\ell(\zabar)-1})$ is a subsequence of
  $(1,2,\ldots,k)$ in $S$ such that the crossing point $i_j$ lies on
  $\pi\circ\zabar_{i_j}$ and  $i_j\ne i_\ell$ if $j\ne \ell$.
\item[(T5)] Two paths
  $\za$ and $\zb$ in $S$ are homotopic if and only if their lifts
  $\tilde\za $ and $ \tilde{ \zb}$, which start at same point in
  $\Sbar$,   are homotopic.
This implies (T5).
\end{itemize}   
\emph{$\pibar $ is injective.} Suppose $\pibar(\zabar)=\pibar(\zbbar)$,
that is, 
$\pi\circ\zabar_i=\pi\circ\zbbar_i$, for all $i$. In particular,
$\ell(\zabar)=\ell(\zbbar)$. Now, $\zabar_i$ is the unique lift of
$\pi\circ\zabar_i$ that starts at $s(\zabar_i)$, and
 $\zbbar_i$ is the unique lift of
$\pi\circ\zbbar_i=\pi\circ\zabar$ that starts at
$s(\zbbar_i)$, for $i=1,\ldots,\ell(\zabar)$. 
Moreover, $s(\zabar_1)=\tilde a=s(\zbbar_1)$. Consequently,
$\zabar=\zbbar$ and $\pibar$ is injective.

\emph{$\pibar $ is surjective.} 
For every $\za\in \mathcal{P}_T(\zg) $ there is a  lift $\tilde
\za$ that starts at $\tilde a$. We have to show that $\tilde \za \in
\mathcal{P}_{\Tbar}(\tilde \zg) $. (T1) and (T2) follow directly from the
construction, and (T5) holds since $\Sbar $ is simply connected. 

Let 
$\za=(\za_1,\ldots,\za_{\ell(\za)})$ and
$\tilde\za=(\tilde \za_1,\ldots,\tilde \za_{\ell(\za)})$. 
Suppose  there exist even integers $s $ and $t$  such that
$\tilde\za_s=\tilde \za_t$. Then the arc $\za_s=\za_t$ crosses $\zg$
in  the two points $i_s$ and $i_t$.
Suppose without loss of generality that $i_t<i_s$. Then the loop 
$\za^\circ=(i_s,i_t,i_s\mid \za_s,\zg)$ is the image of a loop
$\tilde{\za}^\circ=(\overline{i}_s,\overline{i}_t,\overline{i}_s\mid
\zabar_s,\zgbar)$ in $\Sbar$ under $\pibar$.
Since $\pi:\tilde S \to S$ is a universal cover, $\za^\circ$ is
homotopically trivial, and therefore there is an arc in the isotopy
class of $\za_s$ which crosses $\zg$ two times fewer than $\za_s$, a
contradiction. 

This shows that the even arcs of $\tilde \za $ are pairwise
disjoint. Therefore, $\tilde\za$ satisfies (T3) and (T4).
\qed
\end{pf}

By the Laurent phenomenon \cite{FZ1}, every element of the cluster
algebra $\mathcal{A}(\Sbar,\Mbar)$ is a 
Laurent polynomial in the cluster variables  $\{x_{\taubar} \mid
\taubar\in\Tbar\} $.
Therefore,
$\pi $ induces a homomorphism of algebras
$\pi_\mathcal{A}:\mathcal{A}(\Sbar,\Mbar)\to \mathcal{A}(S,M)$ which
is given on the generators $x_{\taubar}$, $\taubar\in \Tbar$ by 
\[\pi_\mathcal{A}(x_{\taubar})=x_{\pi\circ\taubar}.\]

\begin{lem}\label{lem pi} 
Let $\tilde\zb$ be an  arc in $(\Sbar,\Mbar)$ which is a lift of an
arc $\zb$ in $(S,M)$. Then 
\[\pi_\mathcal{A}(x_{\tilde\zb})=x_\zb \]
\end{lem}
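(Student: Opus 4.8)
The statement $\pi_\mathcal{A}(x_{\tilde\zb}) = x_\zb$ compares a cluster variable in $\mathcal{A}(\Sbar,\Mbar)$ with a cluster variable in $\mathcal{A}(S,M)$, so the natural approach is induction on $k = e(\tilde\zb, \Tbar)$, the number of crossings of $\tilde\zb$ with the lifted triangulation. Note first that $e(\tilde\zb,\Tbar) = e(\zb, T)$: the covering map $\pi$ restricts to a bijection between the crossing points of $\tilde\zb$ with $\Tbar$ and those of $\zb$ with $T$, since $\Tbar = \pi^{-1}(T) \cap \Sbar$ contains enough arcs to witness every crossing (this is essentially the observation already used in the proof of Lemma~\ref{lem pibar}). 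For the base case $k=0$, the arc $\tilde\zb$ lies in $\Tbar$, so $x_{\tilde\zb}$ is one of the generators and $\pi_\mathcal{A}(x_{\tilde\zb}) = x_{\pi\circ\tilde\zb} = x_\zb$ by definition of $\pi_\mathcal{A}$.

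\emph{The inductive step.} Suppose $k \ge 1$. Apply Lemma~\ref{lem exchange} to the arc $\tilde\zb$ in the simply connected surface $(\Sbar,\Mbar)$: it produces arcs $\tilde\rho_1, \tilde\rho_2, \tilde\zs_1, \tilde\zs_2, \tilde\zb'$, each crossing $\Tbar$ fewer than $k$ times, forming a simply connected quadrilateral with $\tilde\zb, \tilde\zb'$ as diagonals, and satisfying the exchange relation
\[
x_{\tilde\zb}\, x_{\tilde\zb'} = x_{\tilde\rho_1} x_{\tilde\rho_2} + x_{\tilde\zs_1} x_{\tilde\zs_2}
\]
in $\mathcal{A}(\Sbar,\Mbar)$. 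Now push everything down: set $\rho_i = \pi\circ\tilde\rho_i$, $\zs_i = \pi\circ\tilde\zs_i$, $\zb' = \pi\circ\tilde\zb'$. The key geometric point is that $\pi$ maps the simply connected quadrilateral in $\Sbar$ to a quadrilateral in $S$ with $\zb, \zb'$ as diagonals and sides $\rho_1,\rho_2,\zs_1,\zs_2$, so that the corresponding exchange relation
\[
x_{\zb}\, x_{\zb'} = x_{\rho_1} x_{\rho_2} + x_{\zs_1} x_{\zs_2}
\]
holds in $\mathcal{A}(S,M)$. Since each of $\tilde\rho_1,\tilde\rho_2,\tilde\zs_1,\tilde\zs_2,\tilde\zb'$ crosses $\Tbar$ fewer than $k$ times, the induction hypothesis gives $\pi_\mathcal{A}(x_{\tilde\rho_i}) = x_{\rho_i}$, $\pi_\mathcal{A}(x_{\tilde\zs_i}) = x_{\zs_i}$, and $\pi_\mathcal{A}(x_{\tilde\zb'}) = x_{\zb'}$. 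Applying the algebra homomorphism $\pi_\mathcal{A}$ to the upstairs exchange relation and using these identities yields $\pi_\mathcal{A}(x_{\tilde\zb})\, x_{\zb'} = x_{\rho_1}x_{\rho_2} + x_{\zs_1}x_{\zs_2} = x_{\zb}\, x_{\zb'}$; since $\mathcal{A}(S,M)$ is a domain and $x_{\zb'} \ne 0$, we conclude $\pi_\mathcal{A}(x_{\tilde\zb}) = x_{\zb}$.

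\emph{The main obstacle.} The delicate point is verifying that the image quadrilateral downstairs is genuinely a quadrilateral in $(S,M)$ — that the four arcs $\rho_i, \zs_i$ are honest arcs (not self-crossing, not boundary-homotopic) and that $\zb, \zb'$ really are its diagonals, so that the exchange relation downstairs is valid. This requires checking that $\pi$ restricted to (a neighborhood of) this quadrilateral is injective, i.e. the quadrilateral lifts homeomorphically; this should follow from the fact that the quadrilateral upstairs is simply connected and embedded in $\Sbar$, together with the covering space property, but it needs care. A secondary subtlety is ensuring $\zb' \notin T$ or, if $\zb' \in T$, handling the possibility that the exchange relation is still the correct one — but since $\tilde\zb'$ is the unique arc crossing $\tilde\zb$ inside the quadrilateral and $\zb, \zb'$ do cross (exactly once), the mutation relation in $\mathcal{A}(S,M)$ is forced regardless. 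Everything else is bookkeeping with the homomorphism $\pi_\mathcal{A}$ and the domain property of cluster algebras.
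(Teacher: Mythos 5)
Your overall strategy is the reverse of the paper's, and the reversal is exactly where the gap lies. The paper applies Lemma \ref{lem exchange} \emph{downstairs}: it constructs $\rho_1,\rho_2,\zs_1,\zs_2,\zb'$ in $(S,M)$, where the construction explicitly guarantees that $\zb$ and $\zb'$ cross exactly once \emph{in $S$}, so the exchange relation in $\mathcal{A}(S,M)$ holds at the outset; it then lifts the whole configuration to $\Sbar$, which is unproblematic because lifts of paths are unique once a starting point is fixed and a quadrilateral with embedded interior downstairs lifts to one upstairs. You instead apply Lemma \ref{lem exchange} upstairs and push the quadrilateral down, and the step you yourself flag as ``the main obstacle'' is a genuine unresolved gap rather than bookkeeping. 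The decisive failure point is your ``secondary subtlety'': from $e(\tilde\zb,\tilde{\zb'})=1$ in $\Sbar$ you cannot conclude $e(\zb,\zb')=1$ in $S$. A crossing of $\zb$ with $\zb'$ in $S$ lifts to a crossing of $\tilde\zb$ with \emph{some} lift of $\zb'$, so $e(\zb,\zb')$ counts the crossings of $\tilde\zb$ with the entire fibre $\pi^{-1}(\zb')$, not just with the one lift $\tilde{\zb'}$; this number can exceed $1$ (two arcs winding different numbers of times around an annulus, say). If $\zb$ and $\zb'$ cross more than once they are not exchangeable, and the relation $x_{\zb}x_{\zb'}=x_{\rho_1}x_{\rho_2}+x_{\zs_1}x_{\zs_2}$ that your argument needs in $\mathcal{A}(S,M)$ simply does not hold. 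For the same reason the projections $\pi\circ\tilde\rho_i$, $\pi\circ\tilde\zs_i$ need not be simple arcs and the projected quadrilateral need not have embedded interior.

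To repair this you would either have to prove that the particular quadrilateral produced upstairs by Lemma \ref{lem exchange} always projects injectively -- which is not clear and is essentially as hard as the lemma itself -- or do what the paper does: run the construction of Lemma \ref{lem exchange} in $(S,M)$ first (this is precisely why that lemma is stated and proved for arbitrary, not necessarily simply connected, unpunctured surfaces) and then lift each of the six arcs, with compatible choices of starting points, to $\Sbar$. Your base case, the observation that $e(\tilde\zb,\Tbar)=e(\zb,T)$, and the final cancellation of $x_{\zb'}$ using that $\mathcal{A}(S,M)$ is a domain are all fine.
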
  

\begin{pf}
We prove the Lemma by induction on $k=e(\zb,T)$, the minimal number  of
crossing points between $\zb $ and $T$.  
If $k=0$, then $\zb\in T$, and $\pi_\mathcal{A}(x_{\tilde\zb}) =x_\zb$, by
definition. 
Suppose that $k\ge 1$. Let $\rho_1,\ \rho_2,\ \zs_1,\ \zs_2,$
and $\zb'$ be as in Lemma \ref{lem exchange}. Then, in
$\mathcal{A}(S,M)$, we have the
exchange relation 
\begin{equation}\label{exchange}
 x_{\zb} =(  x_{\rho_1} x_{\rho_2}+ x_{\zs_1}x_{ \zs_2})/ x_{\zb'}.
\end{equation}

Let $\tilde \rho_1$
and $\tilde \zs_2$ be the unique lifts  of $\rho_1$ and $\zs_2$,
respectively,  that start at the same point as $\tilde \zb$. 
Let $\tilde \zs_1$ and
$\tilde {\zb'}$ be the unique lifts of $\zs_1$ and $\zb'$, respectively, that start at
the endpoint of $\tilde \rho_1$, and let $\tilde \rho_2$ be the unique
lift of $\rho_2$ that starts at the endpoint of $\tilde\zs_2$.
Then $\tilde \rho_1,\,\tilde \rho_2, \,\tilde \zs_1, \,\tilde \zs_2$
form a quadrilateral in $\Sbar$ in which $\tilde \zb$ and $\tilde {\zb'}$
are the diagonals. Consequently, in the cluster algebra
$\mathcal{A}(\Sbar,\Mbar)$, we have the exchange relation
\begin{equation}\nonumber
 x_{\tilde \zb} =(  x_{\tilde \rho_1} x_{\tilde \rho_2}+ x_{\tilde
  \zs_1}x_{\tilde \zs_2})/ x_{\tilde {\zb'}}.
\end{equation}   

Therefore 
\begin{equation}\nonumber
 \pi_\mathcal{A} (x_{\tilde \zb} )=
( \pi_\mathcal{A}  (x_{\tilde \rho_1}) \pi_\mathcal{A}( x_{\tilde
   \rho_2})
+  \pi_\mathcal{A}(x_{\tilde  \zs_1})  \pi_\mathcal{A}(x_{\tilde
  \zs_2}))/  \pi_\mathcal{A}(x_{\tilde {\zb'}}),
\end{equation}   
 and by induction,
\begin{equation}\nonumber
 \pi_\mathcal{A} (x_{\tilde \zb} )=
( x_{\rho_1} x_{\rho_2}
+x_{\zs_1}  x_{  \zs_2})/ x_{{\zb'}},
\end{equation}   
which is equal to $x_{\zb}$ by equation (\ref{exchange}).
\qed
\end{pf}

\emph{Proof of Theorem \ref{thm 1}:} 
By equation (\ref{eq pf}), we have 
\[x_{\tilde\zg}=\sum_{\overline\za\in \mathcal{P}_{\Tbar}(\tilde
  \zg)} x(\overline{\za}).
\]
We apply the homomorphism $\pi_{\mathcal{A}}$ to this equation,
using Lemma \ref{lem pi}, and we get

\[x_{\zg}
=\sum_{\overline\za\in \mathcal{P}_{\Tbar}(\tilde
  \zg)} x(\pibar(\overline{\za})).
\]
And by Lemma \ref{lem pibar}, this implies 
\[x_{\zg}
=\sum_{\za\in \mathcal{P}_{T}(\zg)} x(\za).
\]
\qed
\end{subsection}

\end{section}


\begin{section}{Expansion formula using mutant arcs}\label{sect 3}

For each non-boundary arc $\alpha$ of $T$, define $\alpha'$ to be the 
arc obtained by mutating $T$ at $\alpha$.  
We will call these ``mutant arcs''.  Let $M$ be the set of these
arcs.  

Fix an arc $\gamma$ which is not in $T$.  
In this section, we will give a new expression for
$x_\gamma$.  

Let $X_\gamma$ be the set of arcs which 
$\gamma$ crosses, or which 
bound a triangle which $\gamma$ crosses.  
Let 
$M_\gamma$ be the mutant arcs corresponding to arcs which $\gamma$ crosses.  

For every triangle through which $\gamma$ passes, 
other than the first and last
triangles, $\gamma$ passes through two of the three arcs of the triangle. 
Let $E_{\gamma}$ 
be the set of the third arcs of these triangles, that is to say,
for each of these triangles, the arc through which $\gamma$ does not pass.

We have already shown that $x_\gamma$ can be written as a Laurent
polynomial in $x_\sigma$ for $\sigma\in X_\gamma$.  

In this section, we will give an explicit expression for 
$x_\gamma$
as a Laurent polynomial in $x_\sigma$ for $\sigma \in X_\gamma\cup M_\gamma$, 
where only variables $x_\sigma$ with $\sigma\in E_\gamma$ occur in 
the denominator.  

\begin{subsection}{Connection to Cluster Algebras} 
The motivation for this result is that it is an explicit 
version of a result from \cite{CA3}.   

The matrix $B$ arising from a cluster is called {\it acyclic} if it
satisfies a certain  
condition, which, in our setting, is equivalent to the condition 
that every
triangle of $T$ has at least one arc lying along the boundary.  
  
Define $x'_i$ to be the 
cluster variable obtained by mutating $x_i$.  
Theorem 1.20 of  
\cite{CA3} says that, if $B$ is acyclic, any cluster variable
in the cluster algebra has an expression as a polynomial 
$f\in \mathbb{ZP}[x_1,\ldots,x_n,x'_1,\ldots,x_n']$. 
%
%

Let us interpret this statement in our setting.  
Let $E$ be the set of boundary arcs.
Theorem 1.20 of  
\cite{CA3} tells us precisely that if 
every triangle of $T$ has an arc on the boundary, then 
there is
 an expression for an arbitrary $x_\gamma$ as a Laurent polynomial
in the variables $x_\sigma$ with $\sigma\in T\cup M$, with
only variables $x_\sigma$ with $\sigma\in E$ appearing in the 
denominator.  Since $E\supset E_\gamma$, $T\supset X_\gamma$, and $M\supset
M_\gamma$, our result is an
explicit realization of the kind of representation guaranteed by
Theorem 1.20.    
\end{subsection}

\begin{subsection}{Recurrence}
Let $A,B$ be marked points, and let $\gamma$ be an arc from $A$ to $B$.
As established in the previous section, if we want to calculate
$x_\gamma$, we can ``unwind'' $\gamma$ and reduce ourselves to the 
simply connected case, where $A$ and $B$ are marked points on the boundary
of a polygon, $P_{AB}$, through all of whose triangles the line segment
$AB$ passes.  We will now write $E_{AB}$ instead of $E_\gamma$.  

For any two marked points $C$ and $D$ on the boundary of 
$P_{AB}$, we write $P_{CD}$ for the polygon
consisting of the union of the triangles through which the line from
$C$ to $D$ passes. 

We will let $I_{AB}$ be the set of interior arcs of $P_{AB}$.  We will speak
of the $I_{AB}$-degree of a vertex, meaning the number of $I_{AB}$-arcs 
incident
with it.  

Suppose that $P_{AB}$ has at least five vertices.  
Let
$V_0$ be the vertex adjacent to $A$ whose $T$-degree is greater than one.
Let $V_1$ be the other vertex adjacent to $A$, and let $V_2$ be the next
vertex along the boundary from it, see Figure \ref{fig mut1} or
\ref{fig mut2} for the two possible configurations.   

\begin{lem}\label{lem:rec} We can calculate the cluster variable $x_{AB}$ by
means of the following recurrence:

\begin{equation}\label{rec}
 x_{AB}=\frac{x_{AV_2}x_{V_1B}-x_{AV_1}x_{V_2B}}{x_{V_1V_2}}
\end{equation}
\end{lem}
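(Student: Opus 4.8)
The plan is to prove the recurrence \eqref{rec} by recognizing its right-hand side as a Ptolemy-type exchange relation in the cluster algebra, applied to a cleverly chosen quadrilateral inside the polygon $P_{AB}$. First I would check that $AV_2$, $V_1B$, $AV_1$, $V_2B$ and $V_1V_2$ are all genuine arcs (or boundary arcs) of the surface: since $P_{AB}$ has at least five vertices, the triangle of $T$ containing the edge $AV_1$ has $V_1$ as a vertex of $T$-degree at least two, so the configuration is exactly one of the two pictured in Figures \ref{fig mut1} and \ref{fig mut2}, and in both the segments $AV_2$ and $V_1B$ are diagonals that cross fewer triangles of $P_{AB}$ than $AB$ does. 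The key observation is that $AB$ and $V_1V_2$ are the two diagonals of the quadrilateral with vertices $A, V_1, V_2, B$ (taken in the appropriate cyclic order), with sides $AV_1$, $V_1V_2$ would-be — wait, one must be careful: the four sides of that quadrilateral are $AV_1$, $V_1V_2$, $V_2B$, $BA$, so $V_1V_2$ is a \emph{side}, not a diagonal. The correct quadrilateral is the one whose diagonals are $AV_2$ and $V_1B$; its four sides are $AV_1$, $V_1V_2$, $V_2B$, and $BA$, i.e.\ $x_{AB}$ appears as a \emph{side}.

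So the actual mechanism is: apply the exchange relation (Ptolemy relation) in $\mathcal{A}(S,M)$ to the quadrilateral with vertices $A,V_1,V_2,B$ whose diagonals are $\gamma=AB$ and $V_1V_2$. Concretely, this quadrilateral has sides $AV_1, V_1V_2$ — no. Let me restate cleanly: the quadrilateral $A V_1 V_2 B$ (in this cyclic boundary order along $\partial P_{AB}$) has sides $AV_1$, $V_1V_2$, $V_2B$, $BA$; its two diagonals are $AV_2$ and $V_1B$. Hence the Ptolemy/exchange relation reads $x_{AV_2}\,x_{V_1B} = x_{AV_1}\,x_{V_2B} + x_{V_1V_2}\,x_{AB}$, which rearranges to exactly \eqref{rec}. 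Thus the proof reduces to two things: (i) verifying that $A,V_1,V_2,B$ occur in this cyclic order on the boundary of $P_{AB}$ and that the arcs $AV_1$ and $V_2B$ are sides of triangles of $T$ (so they are legitimate arcs or boundary arcs of $(S,M)$), and (ii) confirming that the diagonals of this quadrilateral are $AV_2$ and $V_1B$, so that the standard exchange relation applies. Part (i) follows from the definition of $V_0,V_1,V_2$: $V_1$ is the vertex adjacent to $A$ of $T$-degree one in $P_{AB}$, so $AV_1$ bounds a single triangle $AV_1V_0$ of $T$; since $P_{AB}$ has $\ge 5$ vertices, $V_2\ne B$, and $V_2$ is adjacent to $V_1$ on $\partial P_{AB}$ on the side away from $A$.

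For the exchange relation itself I would invoke the setup already used in Section \ref{sect 2}: in the cluster algebra from a surface, whenever $\beta$ and $\beta'$ are the two diagonals of a (simply connected) quadrilateral with sides $\rho_1,\sigma_1,\rho_2,\sigma_2$ (opposite sides $\rho_1,\rho_2$ and $\sigma_1,\sigma_2$), one has $x_\beta x_{\beta'} = x_{\rho_1}x_{\rho_2} + x_{\sigma_1}x_{\sigma_2}$ — this is exactly Lemma \ref{lem exchange}(c) (or the classical fact that arcs of a quadrilateral satisfy the Ptolemy relation, cf.\ equation \eqref{exch}). Applying it with $\beta = AV_2$, $\beta' = V_1B$, and the four sides $\{AV_1, V_2B\}$ and $\{V_1V_2, AB\}$ as the two opposite pairs, we obtain $x_{AV_2}x_{V_1B} = x_{AV_1}x_{V_2B} + x_{V_1V_2}x_{AB}$. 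Solving for $x_{AB}$ gives \eqref{rec}.

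The main obstacle I anticipate is bookkeeping about \emph{which} four arcs form the quadrilateral and in which role (side vs.\ diagonal), because this differs between the two configurations of Figures \ref{fig mut1} and \ref{fig mut2}; I would handle the two cases in parallel, checking in each that the quadrilateral $AV_1V_2B$ is embedded (its interior a union of triangles of $T$, so simply connected) and that no two of the five arcs coincide in a way that would degenerate the relation. A secondary point to verify is that $V_1V_2$ is an arc of $T$ — which holds because $V_1$ and $V_2$ are consecutive vertices on $\partial P_{AB}$, so the segment $V_1V_2$ either is a boundary arc of $P_{AB}$ or, in the interior case, is the unique arc of $T$ separating the triangle $AV_1V_0$-region from the rest of $P_{AB}$ — again read off from the figures. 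Once the geometry is pinned down, the algebra is immediate.
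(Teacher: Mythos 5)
Your proposal is correct and is essentially identical to the paper's proof: the paper also derives \eqref{rec} by rearranging the exchange relation $x_{AV_2}x_{V_1B}=x_{AV_1}x_{V_2B}+x_{AB}x_{V_1V_2}$ for the quadrilateral $AV_1V_2B$ whose diagonals are $AV_2$ and $V_1B$. Your additional verification that the five segments are genuine arcs and that the quadrilateral is embedded is sound supplementary detail that the paper leaves implicit.
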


\begin{proof} The result follows immediately from the exchange relation:

$$x_{AV_2}x_{V_1B}=x_{AV_1}x_{V_2B}+x_{AB}x_{V_1V_2}.$$

\end{proof}

The above recursion is, in fact, all we need to recover Theorem 1.20 of
\cite{CA3} in our setting, as the following corollary shows.  

\begin{cor} There is an expression for $x_{AB}$ as a Laurent polynomial
in the variables $x_\sigma$ for $\sigma \in X_{AB}\cup M_{AB}$, such that the only
variables appearing in the denominator are $x_\sigma$ with $\sigma\in E_{AB}$.
\end{cor}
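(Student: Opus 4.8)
The plan is to prove the corollary by induction on the number of vertices of the polygon $P_{AB}$, using the recurrence of Lemma~\ref{lem:rec} as the inductive step. The base case is when $P_{AB}$ has at most four vertices: if it has three vertices then $AB$ is a single arc (a boundary arc of $P_{AB}$ or an interior arc crossed zero times, so $x_{AB}\in\{x_\sigma : \sigma\in X_{AB}\}$ trivially), and if it has four vertices then $AB$ is the unique diagonal crossing exactly one arc $\tau$, so $x_{AB}=x_{\tau'}$ with $\tau'\in M_{AB}$, again a single variable with empty denominator; in both cases the claim is immediate since $E_{AB}$ could even be empty here (there are no ``middle'' triangles).

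For the inductive step, assume $P_{AB}$ has at least five vertices and apply \eqref{rec}:
\[
 x_{AB}=\frac{x_{AV_2}x_{V_1B}-x_{AV_1}x_{V_2B}}{x_{V_1V_2}}.
\]
Each of the four polygons $P_{AV_2}$, $P_{V_1B}$, $P_{AV_1}$, $P_{V_2B}$ appearing in the numerator is obtained from $P_{AB}$ by deleting at least one triangle, hence has strictly fewer vertices, so the induction hypothesis applies to each of $x_{AV_2},x_{V_1B},x_{AV_1},x_{V_2B}$. The key bookkeeping step is then to verify the three containments that make these sub-expressions fit together into an expression of the required shape: first, $X_{CD}\cup M_{CD}\subseteq X_{AB}\cup M_{AB}$ for each of the four sub-polygons $P_{CD}$, which holds because the arcs crossed by $CD$ (and those bounding a triangle crossed by $CD$) form a subset of the arcs crossed by, or bounding a triangle crossed by, $AB$; second, $E_{CD}\subseteq E_{AB}$ for each sub-polygon, since a ``middle'' triangle of $P_{CD}$ is also a middle triangle of $P_{AB}$ (deleting a triangle at one end of $P_{AB}$ cannot turn an interior triangle into an endpoint triangle of the sub-polygon --- this needs a small check in the two configurations of Figures~\ref{fig mut1} and \ref{fig mut2}); and third, crucially, $x_{V_1V_2}\in\{x_\sigma : \sigma\in E_{AB}\}$, i.e.\ the extra denominator factor introduced by the recurrence is the third side of a middle triangle of $P_{AB}$, namely the triangle $AV_1V_2$ (or $V_1V_2$ together with whichever vertex completes it), which is an interior, non-extremal triangle of $P_{AB}$ precisely because $P_{AB}$ has at least five vertices and $V_1,V_2$ were chosen so that $V_1$ is the degree-one neighbour of $A$.

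Putting these together, the right-hand side of \eqref{rec} is a ratio whose numerator is a $\mathbb{ZP}$-linear combination of products of Laurent polynomials in the $x_\sigma$, $\sigma\in X_{AB}\cup M_{AB}$, with denominators only from $E_{AB}$, and whose one additional denominator factor $x_{V_1V_2}$ is also from $E_{AB}$; hence $x_{AB}$ itself is such a Laurent polynomial. I expect the only genuinely delicate point to be the analysis of the two configurations in Figures~\ref{fig mut1} and \ref{fig mut2}: one must check that the triangle $AV_1V_2$ really is present in $T$ restricted to $P_{AB}$ (so that $x_{V_1V_2}$ is defined and $V_1V_2\in E_{AB}$), and that removing the appropriate triangles genuinely decreases the vertex count while preserving the ``every middle triangle stays a middle triangle'' property --- the rest is routine containment-checking. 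One should also note at the end that the statement for general $\gamma$ (not just the simply-connected $P_{AB}$) follows from the unwinding argument of Section~\ref{sect 2} together with Lemma~\ref{lem pi}, exactly as in the reduction already performed at the start of this subsection.
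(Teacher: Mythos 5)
Your proof is correct and follows essentially the same route as the paper's: the same induction (your vertex count of $P_{AB}$ is equivalent to the paper's induction on the number of arcs of $T$ crossed), the same base cases (arc of $T$, mutant arc), and the same inductive step via Lemma~\ref{lem:rec}, with the key observation that the only new denominator factor $x_{V_1V_2}$ lies in $E_{AB}$. The one small slip is that the middle triangle of $P_{AB}$ having $V_1V_2$ as its third side is $V_0V_1V_2$, not $AV_1V_2$ (which is not a triangle of $T$) --- your parenthetical hedge covers this --- and otherwise you are, if anything, more explicit than the paper about the containments $X_{CD}\cup M_{CD}\subseteq X_{AB}\cup M_{AB}$ and $E_{CD}\subseteq E_{AB}$ that the argument requires.
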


\begin{proof} The proof is by induction on the number of arcs of $T$
crossed by $AB$.  If $AB$ crosses no arcs of $T$, it is in $T$, so 
$x_{AB}$ is the desired expression.  If $AB$ crosses exactly one arc of $T$, then
it is a mutant arc, so $x_{AB}$ is, again, an expression of the desired form.

So assume that $AB$ crosses at least two arcs of $T$, and that the
claim is proved for any $x_{CD}$ such that $CD$ crosses fewer 
arcs of $T$ than $AB$ does.  Since $AB$ crosses at least two arcs of $T$,
we know $P_{AB}$ has at least five vertices, and we can apply 
Lemma~\ref{lem:rec}.
Let $V_0, V_1, V_2$ be as in the preamble to Lemma~\ref{lem:rec}.  So (\ref{rec}) holds.
Note that the only variable in the denominator of the right hand
side of (\ref{rec}) is 
$x_{V_1V_2}$, and $V_1V_2$ is in $E_{AB}$.  
Of the cluster variables which appear in
the numerator, $x_{AV_1}$ is from $T$, and 
$x_{AV_2}$ is from $M$.  This leaves $x_{V_1B}$ and $x_{V_2B}$.  The
corresponding arcs are typically from neither $X$ nor $M$, but they cross
fewer arcs of $T$ than does $AB$, so the induction hypothesis applies,
and we know that $x_{V_iB}$ can be written as a Laurent polynomial in 
$x_\sigma$ with $\sigma \in T\cup M$, where the variables appearing in 
the denominator are from $E_{V_iB}$.  Since $E_{V_iB}\subset E_{AB}$,
the expression guaranteed by the induction hypothesis for $x_{V_iB}$ is of
the desired form, and the corollary is proved.
\end{proof}
\end{subsection}

\begin{subsection}{Explicit formula}
We will now proceed to give an explicit formula of the form guaranteed
by Theorem 1.20 of \cite{CA3} or by the previous corollary.  
In order to do this, we need to introduce some further notation.  
Number the arcs of $I_{AB}$ as $I_{AB}^0$, $I_{AB}^1,$ etc., 
in the order in which they cross
$AB$, and then likewise number the corresponding mutant arcs so that 
the mutation of the arc of $I_{AB}^i$ is $M_{AB}^i$.
The arc $M_{AB}^i$ is considered to be oriented so that
it crosses $I_{AB}^i$ from the side on which $A$ lies towards
the side on which $B$ lies.  The boundary arcs of $P_{AB}$ are oriented
to point from $A$ to $B$.  

Let the arcs $\overline I_{AB}$ be the arcs of $I_{AB}$ which connect 
vertices whose $I_{AB}$-degree is at least two.  

We now define a set of paths from $A$ to $B$, which we denote 
$\mathcal G_{AB}$.  These are the paths satisfying the following properties:
\begin{enumerate}
\item[G1$_{AB}$] The length of the path is odd.
\item[G2$_{AB}$] The arcs appearing are from the boundary of 
$P_{AB}$, $\overline I_{AB}$, and
$M_{AB}$.
\item[G3$_{AB}$] The arcs appearing in even position are all from
$E_{AB}$.
\item[G4$_{AB}$] The arcs of $\overline I_{AB}\cup M_{AB}$ 
are used in numerical order
(so in particular, at most one of $I_{AB}^i$ and $M_{AB}^i$ is used).
\item[G5$_{AB}$] The arcs of $M_{AB}$ are used only in the forward
direction.  
\item[G6$_{AB}$] The path
 should not touch $A$ other than at the beginning, and not touch $B$
other than at the end.
\end{enumerate}

For $\gamma$ an odd-length path between two vertices $C$ and $D$,  
define $k(\gamma)$ 
to be the number of boundary 
arcs of $\gamma$ which are used contrary to their orientation, plus 
$(\ell(\gamma)-1)/2$, plus the number of arcs of $I_{CD}$ used.  

For $\mathcal A$ a set of paths, write $\int \mathcal A$ for 
$\sum_{\gamma\in\mathcal A}(-1)^{k(\gamma)}x(\gamma)$.  
Now we have the following theorem:

\begin{thm}\label{thm 2} We have the following expression for the cluster variable
$x_{AB}$:
\begin{equation}\label{eq:main} x_{AB}=\int \mathcal G_{AB} \end{equation}
\end{thm}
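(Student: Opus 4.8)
The plan is to prove the identity $x_{AB}=\int\mathcal G_{AB}$ by induction on the number of arcs of $T$ crossed by $AB$, mirroring the structure of the proof of the Corollary in the previous subsection, but now tracking the set of paths and the signs $(-1)^{k(\gamma)}$ carefully. The base cases are immediate: if $AB$ crosses no arc of $T$ then $AB\in T$ and $\mathcal G_{AB}=\{(AB)\}$ with $k=0$, so $\int\mathcal G_{AB}=x_{AB}$; if $AB$ crosses exactly one arc then $AB$ is a mutant arc $M_{AB}^0$ used in the forward direction, and again $\mathcal G_{AB}$ consists of the single length-one path with $k=0$. (One must also check the tiny case where $P_{AB}$ has exactly four vertices, which is this second case.)

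For the inductive step, assume $AB$ crosses at least two arcs, so $P_{AB}$ has at least five vertices, and let $V_0,V_1,V_2$ be as in the preamble to Lemma \ref{lem:rec}. The recurrence (\ref{rec}) gives
\begin{equation}\label{eq:proofrec}
x_{AB}=\frac{x_{AV_2}x_{V_1B}-x_{AV_1}x_{V_2B}}{x_{V_1V_2}}.
\end{equation}
By induction, $x_{V_1B}=\int\mathcal G_{V_1B}$ and $x_{V_2B}=\int\mathcal G_{V_2B}$; moreover $x_{AV_1}$ is the boundary or $T$-arc from $A$ to $V_1$ (oriented $A\to V_1$, contributing $k=0$), $x_{AV_2}=x_{M_{AB}^0}$ is the first mutant arc used forward ($k=0$), and $x_{V_1V_2}$ is the arc $V_1V_2\in E_{AB}$ appearing in the denominator. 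The heart of the argument is a bijection: prepending the length-one path $(A\to V_2)$ to each path in $\mathcal G_{V_2B}$ and inserting the even-position arc $V_1V_2$, respectively prepending $(A\to V_1)$ and inserting $V_1V_2$, to each path in $\mathcal G_{V_1B}$, produces exactly the paths in $\mathcal G_{AB}$ — with the first family giving those paths whose second arc is $V_1V_2$ and third arc starts at $V_2$, and the second family the complementary paths whose third arc starts at $V_1$. I would verify that each axiom G1--G6 for the prepended path corresponds precisely to the axioms for the tail path together with the geometry of the $A$-corner (here the two configurations of Figures \ref{fig mut1} and \ref{fig mut2} must be handled), and conversely that every path in $\mathcal G_{AB}$ arises this way — its second arc must be the unique $E_{AB}$-arc $V_1V_2$ near $A$ by G3 and G6, and its first arc is then forced to be $AV_1$ or $M_{AB}^0=AV_2$. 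Finally I would check the sign bookkeeping: passing from a tail $\gamma'$ to its extension increases $\ell$ by $2$ (adding $1$ to $(\ell-1)/2$) and either adds the backward boundary arc $AV_1$, adds the forward mutant arc $AV_2$, or converts an $I$-arc count appropriately; combined with the inserted $V_1V_2$, one verifies that the extension built from $\mathcal G_{V_1B}$ carries a relative sign $-1$ and the one from $\mathcal G_{V_2B}$ a relative sign $+1$, matching the numerator of (\ref{eq:proofrec}), while the denominator $x_{V_1V_2}$ is absorbed by the inserted even-position arc via formula (\ref{**}).

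The main obstacle I anticipate is \emph{precisely matching the path axioms across the bijection}, in particular reconciling the local combinatorics at the $A$-corner (which of $I$ versus $\overline I$ arcs are available, what happens to indices when the first interior arc of $P_{V_1B}$ or $P_{V_2B}$ differs from $I_{AB}^1$, and ensuring G4's "numerical order" and G6's "do not revisit $A$" are preserved) with the global recursion; subtleties like the difference between $I_{AB}$ and $\overline I_{AB}$, and the relation between $E_{V_iB}$ and $E_{AB}$, need to be tracked with care. The sign computation is routine once the bijection is set up correctly, but getting the two configurations in Figures \ref{fig mut1} and \ref{fig mut2} to both work uniformly — especially verifying that in each case the extension from $\mathcal G_{V_1B}$ really does use $AV_1$ against its orientation while the one from $\mathcal G_{V_2B}$ uses $AV_2=M_{AB}^0$ with its orientation — is where the real content lies.
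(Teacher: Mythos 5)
Your overall strategy (induction on the number of crossings of $AB$ with $T$, the recurrence of Lemma \ref{lem:rec}, and a matching of the two terms of its numerator with families of paths) is the same as the paper's, but the bijection you place at the heart of the inductive step is false as stated, and this is exactly where the content of the proof lies. The difficulty is that $\mathcal G_{V_1B}$ is defined relative to the polygon $P_{V_1B}$, in which $V_0V_1$ is a \emph{boundary} arc; hence $\mathcal G_{V_1B}$ contains paths that use $V_0V_1$. In $P_{AB}$, however, $V_0V_1$ is the first interior arc $I_{AB}^0$, it does not lie in $\overline I_{AB}$ (the vertex $V_1$ has $I_{AB}$-degree one), and its mutant arc is $AV_2=M_{AB}^0$. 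Prepending $A\to V_2\to V_1$ to such a path therefore produces a path using both $AV_2$ and $V_0V_1$, violating G2$_{AB}$ and G4$_{AB}$: the image of your map is not contained in $\mathcal G_{AB}$. Symmetrically, your claimed decomposition of $\mathcal G_{AB}$ into exactly two families according to whether the third arc starts at $V_1$ or at $V_2$, with second arc always $V_1V_2$, is incorrect: $\mathcal G_{AB}$ also contains paths beginning $A\to V_0\to V_3\to\cdots$ whose second arc is the $E_{AB}$-arc $V_0V_3$, so the step ``its second arc must be the unique $E_{AB}$-arc $V_1V_2$ near $A$'' fails.

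The paper repairs this by splitting $\mathcal G_{V_iB}=\mathcal F_i\amalg V_iV_0\mathcal F_3$, where $\mathcal F_i$ consists of paths from $V_i$ to $B$ satisfying the \emph{$AB$-conditions}, and then applying a second exchange relation, $x_{AV_2}x_{V_1V_0}=x_{V_1V_2}x_{AV_0}+x_{AV_1}x_{V_0V_2}$, to the offending contribution $\int AV_2V_1V_0\mathcal F_3$. This converts it into $\int AV_1V_2V_0\mathcal F_3+\int AV_0\mathcal F_3$, which \emph{are} legitimate subfamilies of $\mathcal G_{AB}$, and only then do the pieces $AV_2\mathcal F_1$, $AV_1V_2V_0\mathcal F_3$, $AV_0\mathcal F_3$ and $AV_1\mathcal G_{V_2B}$ reassemble into $\mathcal G_{AB}$ (with a parallel computation, involving a cancellation of the $AV_1V_2V_0\mathcal F_3$ terms, when the $I_{AB}$-degree of $V_0$ exceeds two). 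You correctly flag the matching of path axioms at the $A$-corner as the danger point, but the clean two-part bijection you propose there does not exist; without the extra exchange-relation step the induction does not close, so the proposal has a genuine gap rather than merely unchecked details.
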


\begin{proof} The proof is by induction.  The theorem is clearly true if 
$AB$ is an arc of $T$, or $AB$ is a mutant arc.  So assume that $AB$
crosses at least two arcs of $T$, and that the theorem holds for any
segment $CD$ which crosses fewer arcs of $T$ than $AB$ does.  

The proof involves showing that the formula (\ref{eq:main}) satisfies
the recurrence in Lemma~\ref{lem:rec}.  
By the induction hypothesis, 

$$x_{V_iB}=\int \mathcal G_{V_iB}.$$

By Lemma~\ref{lem:rec}, we therefore know:

$$x_{AB}=\frac{x_{AV_2}}{x_{V_2V_1}}\int\mathcal G_{V_1B} - \frac{x_{AV_1}}
{x_{V_1V_2}}\int \mathcal G_{V_2B}.$$

We will now establish that the right hand side of the above equation equals
$\int \mathcal G_{AB}$.  
We split into two cases, depending on whether the $I_{AB}$-degree of $V_0$ is 2 
or more.  

Suppose first that it is 2.  So our situation is as in Figure \ref{fig
  mut1}.
We have labelled the next vertex after $V_0$ as $V_3$. 
\begin{figure}
\begin{center}
\input{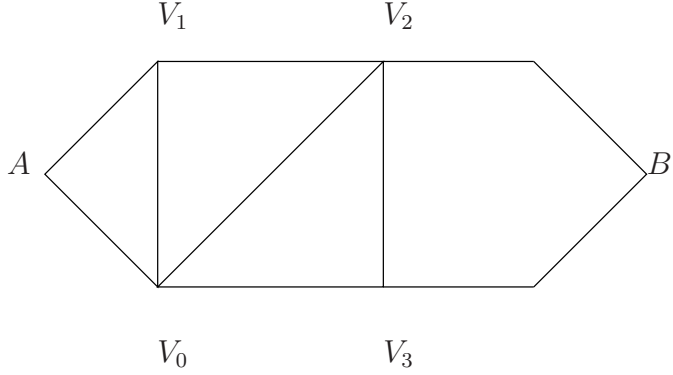}
\caption{First case,  $I_{AB}$-degree of $V_0$ is 2}\label{fig mut1}
\end{center}
\end{figure}

For $i=1,2,3$, write $\mathcal F_i$ for the paths from
$V_i$ to $B$ but 
satisfying the conditions G1$_{AB}$--G6$_{AB}$.   
(In particular, no $\mathcal F_i$ uses 
the arc $V_0V_1$, since it is an internal arc
not in $\overline I_{AB}$.)    
So $\mathcal G_{V_1B}$ includes
paths in addition to those in $\mathcal F_1$, because $\mathcal G_{V_1B}$
can include paths using the arc $V_0V_1$.

Write $V_1V_0\mathcal F_3$ for paths that run from $V_1$
to $V_0$ to $V_3$, and thence follow a path in 
$\mathcal F_3$, and similarly for other (even-length) 
sequences of vertices followed by a set of paths.  Then

$$\mathcal G_{V_1B}= \mathcal F_1 \amalg V_1V_0\mathcal F_3.$$  

Now, 

$$\frac {x_{AV_1}}{x_{V_1V_2}}\int\mathcal G_{V_2B}= -\int AV_1\mathcal G_{V_2B}$$
where the negative sign appears because the lengths of the paths being
summed over have each decreased by two.  And similarly,

\begin{equation}\label{eqn1}
\frac{x_{AV_2}}{x_{V_2V_1}}\int\mathcal G_{V_1B}=
\int AV_2\mathcal F_1 - \int  AV_2V_1V_0\mathcal F_3
\end{equation}
where the first term is positive because the change in length is cancelled
out by adding a backwards boundary arc, and the 
negative sign in the second term appears because, in addition to the two
effects already mentioned, we have an arc $V_1V_0$ which was formerly a
boundary arc, but is now in $I_{AB}$, so contributes a factor of -1.  

We must simplify further, however, because the paths in the second term are
not in $\mathcal G_{AB}$, as they use both $V_0V_1$ and its mutant arc
$AV_2$, so this violates G4$_{AB}$.  
Thus, we must take advantage of the exchange
relation which tells us that

$$x_{AV_2}x_{V_1V_0}=x_{V_1V_2}x_{AV_0}+x_{AV_1}x_{V_0V_2}.$$

Thus, 

$$(\ref{eqn1})=\int AV_2\mathcal F_1 +\int AV_1V_2V_0\mathcal F_3
+\int AV_0\mathcal F_3$$

Since
$$\mathcal G_{AB}=AV_2\mathcal F_1 \amalg AV_1V_2V_0\mathcal F_3
\amalg AV_0\mathcal F_3 \amalg AV_1\mathcal G_{V_2B},$$
it follows that $x_{AB}=\int \mathcal G_{AB}$, as desired.

We now consider the second case, where the situation is shown in
Figure \ref{fig mut2}.

\begin{figure}
\begin{center}
\input{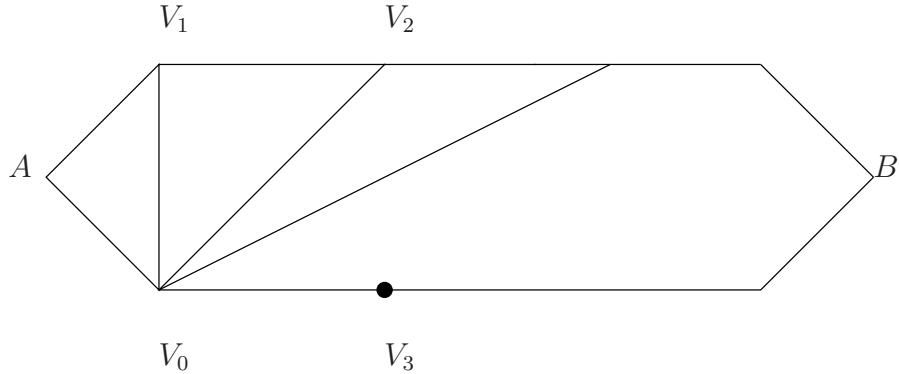}
\caption{Second case,  $I_{AB}$-degree of $V_0$ is greater than 2}\label{fig mut2}
\end{center}
\end{figure}

 Note that the next vertex after $V_0$ away from $A$ has been labelled
$V_3$.  In this case, the analysis is similar to the previous case.

$$\mathcal G_{V_1B}=\mathcal F_1 \amalg V_1V_0\mathcal F_3$$
$$\mathcal G_{V_2B}=\mathcal F_2\amalg V_2V_0\mathcal F_3$$
$$\frac{x_{AV_1}}{x_{V_1V_2}}\int \mathcal G_{V_2B}= 
-\int AV_1\mathcal F_2 + \int AV_1V_2V_0\mathcal F_3$$
$$\frac{x_{AV_2}}{x_{V_2V_1}}\int \mathcal G_{V_1B} = \int AV_2\mathcal F_1
-\int AV_2V_1V_0\mathcal F_3 = \int AV_2\mathcal F_1 + 
\int AV_1V_2V_0\mathcal F_3 + \int AV_0\mathcal F_3$$

Since $\mathcal G_{AB}= AV_2\mathcal F_1 \amalg AV_0\mathcal F_3 \amalg
AV_1\mathcal F_2$, it follows that 
$x_{AB}=\int \mathcal G_{AB}$, as desired.
\end{proof}
\end{subsection}
\end{section}

{} 

\vspace{1cm}

\noindent 
Ralf Schiffler\\
Department of  Mathematics and    Statistics\\
University of   Massachusetts at Amherst\\
Amherst, MA 01003--9305, USA\\ 
E-mail address: {\tt schiffler@math.umass.edu}\\

\noindent  Hugh Thomas\\ 
Department of Mathematics and
Statistics\\University of New Brunswick\\
Fredericton, New Brunswick, E3B 5A3
CANADA\\ 
E-mail address: {\tt hugh@math.unb.ca}

\end{document}